\title{Degenerations of orbifold curves as noncommutative varieties}
\author{Tarig Abdelgadir}
\address{Mathematical Sciences \\
Loughborough University,
LE11 3TU,
United Kingdom
}
\email{t.abdelgadir@lboro.ac.uk}
\author{Daniel Chan}
\address{School of Mathematics and Statistics, 
UNSW Sydney, 
NSW 2052,
Australia
}
\email{danielc@unsw.edu.au}
\author{Shinnosuke Okawa}
\address{
Department of Mathematics,
Graduate School of Science,
Osaka University,
Machikaneyama 1-1,
Toyonaka,
Osaka,
560-0043,
Japan.
}
\email{okawa@math.sci.osaka-u.ac.jp}
\author[K.~Ueda]{Kazushi Ueda}
\address{
Graduate School of Mathematical Sciences,
The University of Tokyo,
3-8-1 Komaba,
Meguro-ku,
Tokyo,
153-8914,
Japan.}
\email{kazushi@ms.u-tokyo.ac.jp}
\begin{document}

\begin{abstract}
Boundary points
on the moduli space of pointed curves
corresponding to collisions of marked points
have modular interpretations as degenerate curves.
In this paper,
we study degenerations of orbifold projective curves
corresponding to collisions of stacky points
from the point of view of noncommutative algebraic geometry.
\end{abstract}

\maketitle

% \setcounter{tocdepth}{1}
% \tableofcontents

%
%------------------------------------------------------------------------
%
\section{Introduction}\label{sc:introduction}

A noncommutative projective variety
in the sense of Artin--Zhang~\cite{MR1304753}
is the Grothendieck category
$\Qgr A$
obtained as the quotient
of the category $\Gr A$
of graded right modules
over a graded associative algebra $A$
satisfying the condition $\chi_1$
% in~\cite[Definition 3.7]{MR1304753}
by the full subcategory $\Tor A$
consisting of torsion modules.

% The classification of noncommutative varieties
% leads to the problem of moduli of Grothendieck categories.
% It is often the case that
% the moduli of `smooth' noncommutative varieties
% forms an algebraic stack,
% and the next problem is to find a modular compactification,
% i.e., a proper stack containing the original moduli stack
% such that the boundary classifies `degenerate' objects.
% This problem is subtle in general
% in that if one adds too little,
% then the resulting stack will not be universally closed,
% and if one adds too many,
% then the resulting stack will not be separated.
% The conditions
% to be imposed on the degenerate objects on the boundary
% are often called
% stability conditions.
% The choice of a stability condition
% is not unique in general
% and leads to a variety of modular compactifications.

In~\cite{Abdelgadir-Okawa-Ueda_nccubic},
a class of AS-regular algebras
determined by the quiver in~\pref{fg:dP3_quiver}
describing noncommutative cubic surfaces
are introduced.
The moduli stack
of this class of algebras
is an open substack
of the quotient of an affine space
by a linear action of a torus.
A choice of a stability condition
gives an 8-dimensional smooth proper toric stack,
which contains the moduli stack of smooth marked cubic surfaces
as a locally closed substack.
An interesting feature of this moduli stack
is that $\Qgr$ of the algebra
associated with the point on the boundary
corresponding to a degenerate cubic surface
with a rational double point
is the category of quasi-coherent sheaves
not on the singular cubic surface
but on a noncommutative crepant resolution
(which is derived-equivalent
to the minimal resolution
by the McKay correspondence),
and hence is of finite homological dimension.
\begin{figure}
  \begin{tikzpicture}[thick,>=stealth,baseline=(current  bounding  box.west)]
    \graph [nodes={draw, circle}, grow right=2.5cm, branch down=2cm]
    { {00,10,20} ->[complete bipartite] {01,11,21} -> [complete bipartite] {02,12,22} };
  \end{tikzpicture}
  \caption{The quiver describing noncommutative cubic surfaces}\label{fg:dP3_quiver}
\end{figure}
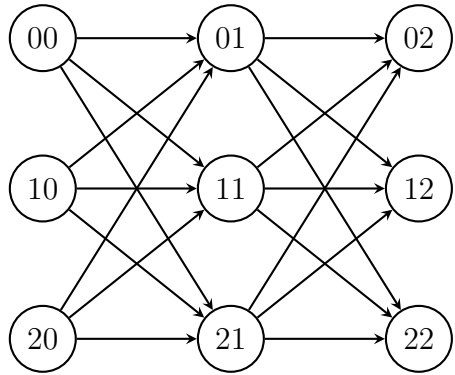

\begin{figure}
  \begin{tikzpicture}[->,thick,>=stealth,vertex/.style={circle,fill=white,draw},baseline=(current  bounding  box.west),cross/.style={preaction={-,draw=white,line width=3mm}}]
      \node[vertex] (0) at (0,0) {\tiny 0};
      \node[vertex] (11) at (3,3) {{\tiny 1,1}};
      \node[vertex] (12) at (6,3) {{\tiny 1,2}};
      \node (13) at (9,3) {$\cdots$};
      \node[vertex] (14) at (12,3) {{\tiny 1,$r_1$-1}};
      \node[vertex] (21) at (3,1) {{\tiny 2,1}};
      \node[vertex] (22) at (6,1) {{\tiny 2,2}};
      \node (23) at (9,1) {$\cdots$};
      \node[vertex] (24) at (12,1) {{\tiny 2,$r_1$-1}};
      \node (31) at (3,-1) {$\vdots$};
      \node (32) at (6,-1) {$\vdots$};
      \node (33) at (9,-1) {$\vdots$};
      \node (34) at (12,-1) {$\vdots$};
      \node[vertex] (41) at (3,-3) {{\tiny $n$,1}};
      \node[vertex] (42) at (6,-3) {{\tiny $n$,2}};
      \node (43) at (9,-3) {$\cdots$};
      \node[vertex] (44) at (12,-3) {{\tiny 2,$r_n$-1}};
      \node[vertex] (1) at (15,0) {\tiny 1};
      \draw (0) -- node[pos=.5,fill=white] {$a_{11}$} (11);
      \draw (11) -- node[pos=.5,fill=white] {$a_{12}$} (12);
      \draw (12) -- node[pos=.5,fill=white] {$a_{13}$} (13);
      \draw (13) -- node[pos=.5,fill=white] {$a_{1,r_1-1}$} (14);
      \draw (14) -- node[pos=.5,fill=white] {$a_{1,r_1}$} (1);
      \draw (0) -- node[pos=.5,fill=white] {$a_{21}$} (21);
      \draw (21) -- node[pos=.5,fill=white] {$a_{22}$} (22);
      \draw (22) -- node[pos=.5,fill=white] {$a_{23}$} (23);
      \draw (23) -- node[pos=.5,fill=white] {$a_{2,r_2-1}$} (24);
      \draw (24) -- node[pos=.5,fill=white] {$a_{2,r_2}$} (1);
      \draw (0) -- node[pos=.5,fill=white] {$a_{n1}$} (41);
      \draw (41) -- node[pos=.5,fill=white] {$a_{n2}$} (42);
      \draw (42) -- node[pos=.5,fill=white] {$a_{n3}$} (43);
      \draw (43) -- node[pos=.5,fill=white] {$a_{n,r_n-1}$} (44);
      \draw (44) -- node[pos=.5,fill=white] {$a_{n,r_n}$} (1);
  \end{tikzpicture}
  \caption{The quiver describing orbifold projective lines}\label{fg:orbifold projective line}
  \end{figure}
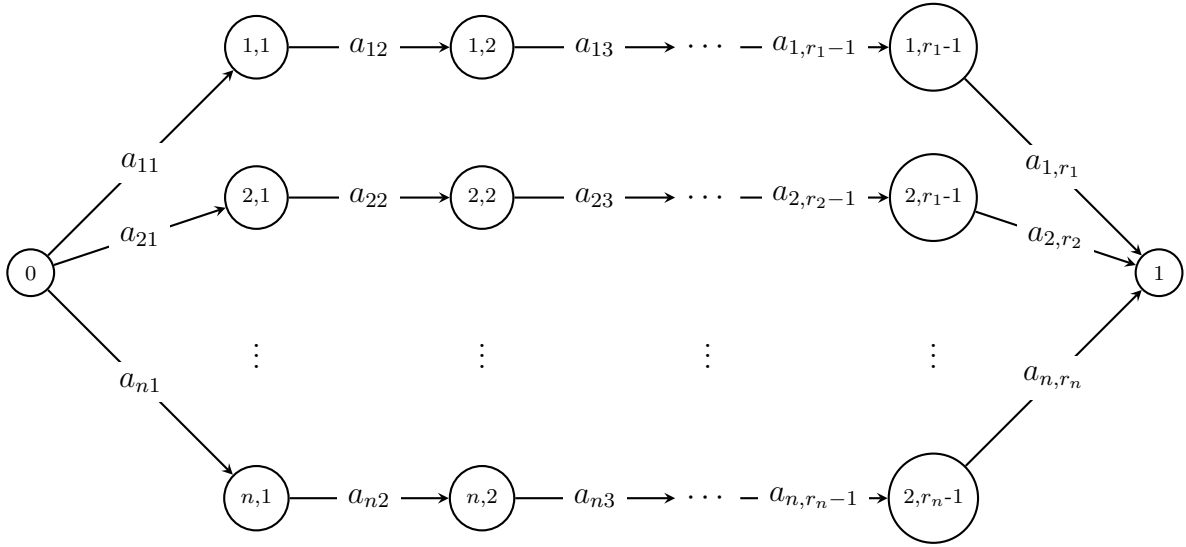

In this paper,
we explore similar phenomena in dimension one.
As a direct one-dimensional analog
of (not necessarily commutative) del Pezzo surfaces,
we consider orbifold projective lines
studied
in depth
in~\cite{MR915180}.
Given a positive integer $n$,
a sequence
$
\bsr=(r_1,\ldots,r_n)
$
of integers greater than $1$,
and a sequence
\begin{align}
    \bslambda
    =
    (
        \lambda_1
        =
        \infty,
        \lambda_2
        =
        0,
        \lambda_3
        =
        1,
        \lambda_4,
        \ldots,
        \lambda_n
    )
    \in
    \left.
        \left(
            \left(
                \bP^1
            \right)
            ^n
            \setminus
            \Delta
        \right)
        \middle/
        \PGL(2)
    \right.
\end{align}
of pairwise distinct points on $\bP^1$
where

\begin{align}
    \Delta
    \coloneqq
    \left\{
        (
            \lambda_i
        )
        _{i=1}^n
        \in
        \left(
            \bP^1
        \right)
        ^n
        \relmid
        \lambda_i
        =
        \lambda_j
        \text{ for some }
        i \ne j
    \right\}
\end{align}
is the big diagonal,
an \emph{orbifold projective line}
is defined by

\begin{align}\label{eq:orbifold projective line}
  \bfX
  =
  \bfX_{\bsr,\bslambda}
  \coloneqq
  \left[ \left( \Spec S \setminus \bszero \right) \middle/ L^\dual \right],
\end{align}
where

\begin{align}\label{eq:S}
    S
    =
    S
    _{
        \bsr,
        \bslambda
    }
    \coloneqq
    \left.
    \bfk[
        x_1,\ldots,x_n
        ]
    \middle/
    \left(
        x_i^{r_i} - x_2^{r_2} + \lambda_i x_1^{r_1}
    \right)
    _{i=3}^n
    \right.
\end{align}
is the homogeneous coordinate ring of $\bfX$
and
$
L^\dual \coloneqq \Spec \bfk[L]
$
is the Cartier dual
of the abelian group

\begin{align}\label{eq:L}
  L = L_\bsr \coloneqq
  \left.
  \bZ \vecc \oplus
  \bigoplus_{i=1}^n \bZ \vecx_i
  \middle/
  \langle
    \vecc - r_i \vecx_i
    \rangle
    _{i=1}^n
  \right.
\end{align}
of rank one.
The coaction
$
S \to \bfk[L] \otimes S
$
dual to the action
$
L^\dual \times \Spec S \to \Spec S
$
is given by
$
x_i \mapsto \vecx_i \otimes x_i
$
for any
$
i \in \{1,\ldots,n\}.
$
The morphism
$
c \colon \bfX \to \bP^1
$
to the coarse moduli scheme
is the $r_i$-th root construction at $\lambda_i$
in a neighborhood of $\lambda_i \in \bP^1$
for each $i$,
and the Picard group $\Pic \bfX$
can be identified with $L$
in such a way that
$\cO_\bfX \left( \vecx_i \right)$ is the universal bundle
associated with the root construction,
so that
$
\cO_{\bfX} \left( \vecc \right)
= \cO_\bfX \left( r_i \vecx_i \right)
\simeq \cO_\bfX(1)
\coloneqq c^{ \ast } \cO_{\bP^1}(1).
$
The dualizing element
\begin{align}
  \vecomega \coloneqq
  (n-2) \vecc - \vecx_1 - \cdots - \vecx_n
\end{align}
corresponds to the canonical sheaf of $\bfX$.
We equip $L$ with a structure of an ordered set
so that
$\veca \preceq \vecb$
if
\begin{align}\label{eq:def of preceq}
  \vecb- \veca \in L^{\ge 0} \coloneqq \left\{ \sum_{i=1}^n a_i \vecx_i \relmid a_i \in \bZ^{\ge 0} \right\}.
\end{align}
Note that
one has
$
\veca \preceq \vecb
$
if and only if
\begin{align}
    \Hom
    \left(
        \cO_\bfX \left( \veca \right),
        \cO_\bfX \left( \vecb \right)
    \right)
    \ne
    0.
\end{align}

The endomorphism algebra of the full strong exceptional collection
\begin{equation}\label{eq:Geigle-Lenzing FSEC}
    \left(
        \cO_\bfX \left( \veca \right)
    \right)
    _{
        0
        \preceq
        \veca
        \preceq
        \vecc
    }
\end{equation}
of line bundles given in~\cite[Proposition 4.1]{MR915180}
% is called \emph{Ringel's canonical algebra}, and
is described by the quiver
$
    Q = Q_\bsr
$
in~\pref{fg:orbifold projective line}
with relations
\begin{equation}\label{eq:canonical relations}
    \cI
    =
    \cI_{\bsr,\bslambda}
    =
    \vspan \left\{ a_{i r_i} \cdots a_{i 1}
    -
    a_{1 r_1} \cdots a_{1 1}
    +
    \lambda_i a_{2 r_2} \cdots a_{2 1} \relmid 3 \le i \le n \right\},
\end{equation}
which is a two-sided ideal
of the path algebra $\bfk Q$
contained in $\bfe_1 \bfk Q \bfe_0$
as a vector subspace.

From the ideal $\cI$ of relations of the quiver $Q$,
one can recover the ideal of relations
of the homogeneous coordinate ring~\pref{eq:S}
as the ideal generated by the image of $\cI$
by the linear map
\begin{align}
  \bfe_1 \bfQ \bfe_0 \to
  \bfk
  \left[
    x_1,\ldots, x_n
  \right]
  \qquad
  a_{i r_i} \cdots a_{i 1} \mapsto x_i^{r_i},
  \quad i \in \{ 1, \ldots, n \}.
\end{align}
This allows the construction of $\bfX$
as the fine moduli stack of \emph{refined} representations
of the quiver
\begin{align}\label{eq:Gamma}
  \Gamma = (Q,\cI)
\end{align}
with relations~\cite{MR3274027}.

The \emph{moduli stack of relations},
discussed more generally in~\cite{1411.7770},
is defined as the stack quotient
\begin{align}
\scrR \coloneqq \left[ \Gr_{n-2} \left( \bfe_1 \bfk Q \bfe_0 \right) \middle/ \Gm^{Q_1} \right]
\end{align}
of the Grassmannian of $(n-2)$-dimensional subspaces
of the $n$-dimensional vector space $\bfe_1 \bfk Q \bfe_0$
by the action of the torus $\Gm^{Q_1}$
coming from the change of basis of $\bfk a$
for each arrow $a \in Q_1$.

\begin{remark}
The stack $\scrR$ has a non-trivial group of generic automorphisms,
which can be rigidified by taking the quotient by
$
    (\Gm)
    ^{n-1}
$
instead of $\Gm^{Q_1}$.
\end{remark}

If we write
\begin{align}
\bsr
=
\left(
\overbrace{ { r }_{ 1 }, \dots, { r }_{ 1 } } ^{ d _{ 1 } },
\overbrace{ { r }_{ 2 }, \dots, { r }_{ 2 } } ^{ d _{ 2 } },
\dots,
\overbrace{ { r }_{ \ell }, \dots, { r }_{ \ell } } ^{ d _{ \ell } }
\right)
\end{align}
for pairwise distinct integers
$
r _{ 1 }, \dots, r _{ \ell },
$
then the automorphism group of the quiver $Q$ is given by
\begin{align}
  \Aut Q \simeq \frakS_{\bsd} \coloneqq \prod_{i=1}^\ell \frakS_{d_i}.
\end{align}
The isomorphism class of an orbifold projective line $\bfX$
determines the full strong exceptional collection~\pref{eq:Geigle-Lenzing FSEC},
which in turn determines the isomorphism class of the quiver $(Q,\cI)$ with relations.
Therefore,
the subspace
$
\cI \subset \bfe_1 \bfk Q \bfe_0
$
is determined by the isomorphism class of $\bfX$
up to the action of
$
\Gm^{Q_1} \rtimes \Aut Q.
$

In order to find 
a well-behaved open substack of the non-separated stack $\scrR$,
replace $\Gr(n-2,n)$ with $\Gr(2,n)$
and
use the Gelfand--MacPherson correspondence~\cite{MR658730}
\begin{align}
  \Gr(2,n) /\!/ (\Gm)^{n-1}
  &\simeq
  \left( \GL_2 \backslash \! \backslash \Mat(2,n) \right) /\!/ (\Gm)^{n-1} \\
  &\simeq
  \PGL_2 \backslash \! \backslash \left( \Mat(2,n) /\!/ (\Gm)^n \right) \\
  &\simeq
  \PGL_2 \backslash \! \backslash \left( \bP^1 \right)^n.
\end{align}
A stability condition
in the sense of geometric invariant theory~\cite{MR1304906}
is determined
by a \emph{stability parameter}
$
\chi = (\chi_i)_{i=1}^n \in \bZ^n \cong \Pic \left( \bP^1 \right)^n
$
in such a way that
a point
$
\bslambda = (\lambda_i)_{i=1}^n \in \left( \bP^1 \right)^n
$ 
is \emph{$\chi$-semistable}
if for any $I \subset \{1,\ldots, n\}$
such that $\lambda_i = \lambda_j$ for $i,j \in I$,
one has
\begin{align}
  \sum_{i \in I} \chi_i \le \frac{1}{2} \sum_{i=1}^n \chi_i.
\end{align}
The point $\bslambda$ is \emph{$\chi$-stable}
if the above inequality is strict.
A stability parameter $\chi$ is \emph{generic}
if semistability implies stability.
For a generic stability parameter,
the open substack $\scrR^{\mathrm s} \subset \scrR$
consisting of stable points
is a principal $B (\Gm)^{|Q_1|-n+1}$-bundle
over a smooth projective scheme.

We now ask if the points
on the boundary of $\scrR^{\mathrm s}$
(i.e., those which do not come from
$
\left( \bP^1 \right)^n \setminus \Delta
$)
have `geometric' interpretations.

To give a `commutative' answer to this problem,
first note that
the definitions of $S_{\bsr,\bslambda}$
and $\bfX_{\bsr,\bslambda}$
in~\pref{eq:S} and~\pref{eq:orbifold projective line}
make sense for any
$
(\lambda_4,\ldots,\lambda_n) \in \bA^{n-3},
$
and can be interpreted as the fine moduli stack
of refined representations of $\Gamma$
in~\pref{eq:Gamma}.
The resulting stack is described as follows:

\begin{theorem}\label{th:fiber product}
    For any
    $
        (\lambda_4,\ldots,\lambda_n) \in \bA^{n-3},
    $
    one has an isomorphism
    \begin{align}\label{eq:fiber product}
        \bfX_{\bsr,\bslambda}
        \simeq
        \bfX_{r_1,\lambda_1}
        \times_{\bP^1}
        \bfX_{r_2,\lambda_2}
        \times_{\bP^1}
        \cdots
        \times_{\bP^1}
        \bfX_{r_n,\lambda_n}
    \end{align}
    of stacks.
\end{theorem}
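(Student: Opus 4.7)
The plan is to identify both sides as explicit quotient stacks by Cartier duals of rank-one abelian groups and observe that they agree on the nose. Heuristically, the data of a map $T \to \bfX_{r_i, \lambda_i}$ amounts to an $r_i$-th root of a section of $\cO_{\bP^1}(1)$ cutting out $\lambda_i$; the fibre product over $\bP^1$ therefore collects all $n$ such roots simultaneously, which is exactly what the ring $S_{\bsr, \bslambda}$ encodes.

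Concretely, with homogeneous coordinates $u,v$ on $\bP^1$, fix linear forms $f_i$ vanishing at $\lambda_i$ (namely $f_1 = u$, $f_2 = v$, $f_i = v - \lambda_i u$ for $i \ge 3$) and present each factor as
\begin{align}
\bfX_{r_i, \lambda_i} \simeq \left[ \left( \Spec B_i \setminus \{u=v=0\} \right) \middle/ L_{(r_i)}^\dual \right], \quad B_i \coloneqq \bfk[u,v,z_i]/(z_i^{r_i} - f_i),
\end{align}
where $L_{(r_i)}$ is the analogue of~\pref{eq:L} for the single integer $r_i$, with $u,v$ placed in degree $\vecc$ and $z_i$ in degree $\vecx_i$. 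The structure morphism to $\bP^1 = [\Spec \bfk[u,v] \setminus 0 / \Gm]$ is then induced by $\bfk[u,v] \hookrightarrow B_i$ together with the homomorphism $L_{(r_i)}^\dual \to \Gm$ Cartier-dual to the inclusion $\bZ \hookrightarrow L_{(r_i)}$, $1 \mapsto \vecc$.

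Next, I would apply the standard fibre-product formula for quotient stacks with compatible data,
\begin{align}
[Y_1/G_1] \times_{[X/H]} \cdots \times_{[X/H]} [Y_n/G_n] \simeq \left[ (Y_1 \times_X \cdots \times_X Y_n) \middle/ (G_1 \times_H \cdots \times_H G_n) \right],
\end{align}
with $Y_i = \Spec B_i \setminus \{u=v=0\}$, $G_i = L_{(r_i)}^\dual$, $X = \Spec \bfk[u,v] \setminus 0$, and $H = \Gm$. The scheme part of the right-hand side of~\pref{eq:fiber product} then becomes $\Spec(B_1 \otimes_{\bfk[u,v]} \cdots \otimes_{\bfk[u,v]} B_n) \setminus \{u=v=0\}$, while the group part is Cartier-dual to the pushout of the character groups $\bZ$ along multiplication by $r_i$, which is precisely $L = L_\bsr$ in~\pref{eq:L}. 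Eliminating $u = z_1^{r_1}$ and $v = z_2^{r_2}$ using $\lambda_1 = \infty$ and $\lambda_2 = 0$, the remaining relations $z_i^{r_i} = z_2^{r_2} - \lambda_i z_1^{r_1}$ for $i \ge 3$ match~\pref{eq:S} under $z_i \leftrightarrow x_i$, the $L$-gradings agree by construction of the pushout, and $\{u=v=0\}$ cuts out $\bszero$ since $u=v=0$ forces $z_i^{r_i}=0$, hence $z_i = 0$, for every $i$.

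The main obstacle is to justify the general fibre-product-of-quotient-stacks formula and to match the non-stable loci on both sides; once these are pinned down, what remains is algebraic bookkeeping. A mild additional subtlety is the need to treat $\lambda_1 = \infty$ and $\lambda_2 = 0$ differently from the generic $\lambda_i$, but this is absorbed into the choice of the linear forms $f_i$.
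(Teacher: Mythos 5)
Your proof is correct and takes essentially the same route as the paper: both apply a fiber-product-of-quotient-stacks lemma (which is exactly~\pref{lm:fiber_product} in the paper) and then reduce to a ring-theoretic computation identifying the fiber product of coordinate rings with $S_{\bsr,\bslambda}$. The only cosmetic difference is your presentation of each factor $\bfX_{r_i,\lambda_i}$ via a three-generator, one-relation ring $B_i$ that keeps the $\bP^1$-coordinates $u,v$ visible, whereas the paper uses $\bA^2 \setminus \bszero$ with a weighted $\Gm$-action and encodes the structure map to $\bP^1$ as a polynomial morphism; these presentations are manifestly isomorphic and the subsequent elimination is the same.
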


To search for a `noncommutative' answer
to the same problem,
note that a stability condition in geometric invariant theory
comes from a choice of a ray
in the group of characters.
In the world of stacks,
we must specify not only a ray
but a submonoid,
since
% unlike the case of projective varieties,
taking the Veronese subring changes $\Qgr$ in general.
In the noncommutative world,
we can choose a subset of the group of characters
which may not be closed under addition
(so that the resulting `noncommutative scheme'
may not be a quotient by a group action).

Consider the $\bfk$-linear category
\(
    \sfS = \sfS_{\bsr,\bslambda}
\)
whose set of objects is $L$
and whose space of morphisms
from $\veca$
to $\vecb$
is the space $S_{\vecb-\veca}$
of homogeneous elements
of degree $\vecb - \veca$.
For a subset $K \subset L$
containing $\bZ \vecc$
and invariant under translation by $\vecc$,
we set
\begin{align}\label{eq:sfSK}
\sfS_K
\coloneqq
(\text{the full subcategory of $\sfS$
consisting of $K$}).
\end{align}
A right $\sfS_K$-module
is a contravariant functor
from $\sfS_K$
to $\Module \bfk$.
An $\sfS_K$-module $M$ is said to be \emph{torsion}
if for any $m \in M$,
there exists $\veca \in K$
such that $m s = 0$
for any $\vecb \in K$
satisfying $\vecb \succeq \veca$
and any $s \in S_{\vecb}$.
The quotient
of the Grothendieck category
$\Gr \sfS_K$ of right $\sfS_K$-modules
by the localizing subcategory $\Tor \sfS_K$
consisting of torsion modules
will be denoted by $\Qgr \sfS_K$.
As is common in noncommutative algebraic geometry
(cf.~e.g.~\cite[Section 3.5]{MR1846352}),
we write $\Qcoh \bfX_K \coloneqq \Qgr \sfS_K$
and talk about quasi-coherent sheaves on $\bfX_K$,
although the symbol $\bfX_K$ alone does not make any sense.
We write the image in $\Qcoh \bfX_K$
of the projective module $\bfe_{\veca} \sfS_K$
represented by the object $\veca \in L$
as $\cO_{\bfX_K} \left( \veca \right)$.
For $K = L$,
one has
\begin{align}
  \bfX_L \simeq \bfX,
\end{align}
and for $K = \bZ \vecc$,
one has
\begin{align}
  \bfX_{\bZ \vecc} \simeq \bP^1.
\end{align}

A flag
$
K' \subset K \subset L
$
of subsets gives a full embedding
$
\sfS_{K'} \to \sfS_{K},
$
the restriction
$
\Gr \sfS_{K} \to \Gr \sfS_{K'}
$
along which induces a functor
$
\Qgr \sfS_{K} \to \Qgr \sfS_{K'}
$
since a torsion module restricts to a torsion module.
This functor is exact and can be regarded
as the push-forward along the `noncommutative contraction'
$\bfX_{K} \to \bfX_{K'}$.

Let $\varpi \colon \bfX \to \bP^1$
be the noncommutative contraction
corresponding to the inclusion
$
\bZ \vecc \subset L,
$
and
\begin{align}\label{eq:A_XK}
  \cA_{\bfX_K} &\coloneqq \varpi_{ \ast } \cEnd \left( \bigoplus_{\veca \in K_0} \cO_{\bfX} \left( \veca \right) \right)
\end{align}
where
\begin{align}
  K_0
  &\coloneqq
  \left\{
    \veca \in K
  \relmid
    0 \preceq \veca \prec \vecc
  \right\}.
\end{align}
We say that
\(
K _{ 0 }
\)
is a fundamental domain of the translation by
\(
\vecc
\)
if
the map
\begin{align}
    K _{ 0 }
    \times
    \bZ
    \to
    K,
    \qquad
    \left(
        \veca,
        m
    \right)
    \mapsto
    \veca
    +
    m
    \vecc
\end{align}
is a bijection.

\begin{theorem}\label{th:Qcoh Y}
    If
    \(
       K _{ 0 }
    \)
    is a fundamental domain of the translation by
    \(
       \vecc
    \),
    then one has an equivalence
\begin{align}\label{eq:Qcoh X}
  \Qcoh \bfX_K \simeq \Qcoh \cA_{\bfX_K}
\end{align}
of categories.
\end{theorem}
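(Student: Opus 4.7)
The plan is to factor the equivalence through the twisted homogeneous coordinate ring of the sheaf of algebras $\cA_{\bfX_K}$ on $\bP^1$. The fundamental-domain hypothesis is precisely what allows us to repackage the $L$-graded category $\sfS_K$ as a singly $\bZ$-graded algebra.

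First, I would introduce the $\bZ$-graded algebra
\[
A \coloneqq \bigoplus_{n \in \bZ} A_n,
\qquad
A_n \coloneqq \bigoplus_{\veca,\vecb \in K_0} \Hom_{\sfS_K}\bigl(\veca, \vecb + n\vecc\bigr) = \bigoplus_{\veca,\vecb \in K_0} S_{\vecb - \veca + n\vecc},
\]
with multiplication induced by composition in $\sfS_K$. The degree-zero summand $A_0$ is the endomorphism ring of $\bigoplus_{\veca \in K_0} \cO_\bfX(\veca)$. Using the projection formula together with $\cO_\bfX(\vecc) \cong \varpi^{\ast}\cO_{\bP^1}(1)$, one identifies
\[
A_n \cong H^0\bigl(\bP^1, \cA_{\bfX_K} \otimes \cO_{\bP^1}(n)\bigr),
\]
so $A$ is exactly the twisted homogeneous coordinate ring of the coherent sheaf of algebras $\cA_{\bfX_K}$ on $\bP^1$. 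As $\cO_{\bP^1}(1)$ is ample, Serre's theorem for a coherent sheaf of algebras on a projective scheme (a direct generalization of the Artin--Zhang equivalence, see e.g.~\cite[Section 3.5]{MR1846352}) yields $\Qgr A \simeq \Qcoh \cA_{\bfX_K}$.

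Second, I would set up an equivalence $\Gr \sfS_K \simeq \Gr A$ coming from the fundamental-domain bijection $K_0 \times \bZ \to K$, $(\veca, n) \mapsto \veca + n\vecc$. To an $\sfS_K$-module $M$ I associate the graded $A$-module $\Psi(M) = \bigoplus_n \Psi(M)_n$ with
\[
\Psi(M)_n \coloneqq \bigoplus_{\veca \in K_0} M(\veca + n\vecc),
\]
and $A$-action induced by the categorical structure of $\sfS_K$. The quasi-inverse sends $N = \bigoplus_n N_n$ to the $\sfS_K$-module with value $e_\veca \cdot N_n$ at $\veca + n\vecc$, where $e_\veca \in A_0$ is the idempotent at $\veca \in K_0$. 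Without the fundamental-domain assumption this assignment is neither well-defined nor exhaustive, so the hypothesis is used precisely here. I would then verify that this equivalence matches the torsion subcategories: a module is torsion in $\Gr \sfS_K$ when every element is annihilated by $S_\vecb$ for $\vecb$ sufficiently large in the order $\preceq$, while torsion in $\Gr A$ is annihilation by $A_{\ge d}$ for $d \gg 0$, and since $K_0$ is finite (being contained in $\{\veca \in L \mid 0 \preceq \veca \prec \vecc\}$), every sufficiently large $\vecb \in L^{\ge 0}$ is dominated by a multiple of $\vecc$ and vice versa. Combining everything gives
\[
\Qcoh \bfX_K = \Qgr \sfS_K \simeq \Qgr A \simeq \Qcoh \cA_{\bfX_K}.
\]

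The main obstacle I anticipate is the torsion matching in the second step: one has to bound arbitrary $\vecb \in L^{\ge 0}$ in terms of multiples of $\vecc$, and this direction of the comparison uses the finiteness of $K_0$ in an essential way. The Artin--Zhang input in the first step is a black box that applies cleanly since ampleness holds automatically on $\bP^1$, and the underlying module-level equivalence is formal once the fundamental-domain bijection is in hand.
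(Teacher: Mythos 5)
Your proposal is correct and follows essentially the same route as the paper. The paper packages the argument through an explicit $(Q_0\times\bZ)$-algebra $B$ (which it identifies with $\sfS_K$) and its collapsed $\bZ$-algebra $C$, then uses the fundamental-domain hypothesis to recognize $C$ as the $\bZ$-algebra associated with the graded ring $A$ from Artin--Zhang; you simply compress the $B\leadsto C\leadsto A$ bookkeeping into the single observation that the fundamental-domain bijection $K_0\times\bZ\simeq K$ repackages $\sfS_K$ as a $\bZ$-graded ring, and you then invoke Artin--Zhang on the sheaf-of-algebras side exactly as the paper does. The only place to be slightly more careful is the torsion-matching step you flagged yourself: in the paper this is absorbed into the standard fact that the $J$-algebra/$\bZ$-algebra collapse preserves $\gr$ and $\qgr$, whereas you argue it directly; your argument that finiteness of $K_0$ lets one dominate any $\vecb\in L^{\ge 0}$ by a multiple of $\vecc$ (and conversely) is the right comparison, though note $A_n$ automatically vanishes for $n<0$ since $\vecb-\veca+n\vecc\prec 0$ whenever $\veca,\vecb\in K_0$ and $n<0$, so your $A$ is in fact $\bZ_{\ge 0}$-graded and Artin--Zhang applies without further modification.
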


% \begin{remark}\label{rm:n=1}
% For $n=1$,
% one has
% $
% L_0 = I_0,
% $
% so that
% $
% \cA_\bfX \simeq \cA_\bfY
% $
% and hence
% $
% \Qcoh \bfX \simeq \Qcoh \bfY.
% $
% \end{remark}

Let us summarize what we have done so far.
For any subset $K \subset L$
containing $\bZ \vecc$
and invariant under translation by $\vecc$,
we have constructed
a noncommutative variety
$
\bfX_K
$
and a sheaf of $\cO_{\bP^1}$-algebras
$
\cA_{\bfX_K}
$
on $\bP^1$
satisfying
\pref{eq:Qcoh X}.
On the other hand,
if $\lambda_i \ne \lambda_j$ for all $i \ne j$,
then one has $D^b \coh X_{\bsr,\bslambda}$
is equivalent to $D^b \module \Gamma$
since the path algebra of $\Gamma$ is isomorphic
to the endomorphism algebra of the full strong exceptional collection
\pref{eq:Geigle-Lenzing FSEC}.
Now we ask if there is a choice of $K$
such that the derived equivalence
holds for any $\bslambda$.
The answer is affirmative.
Take
\begin{align}\label{eq:I}
  I = \left\{ m \vecx_i \relmid m \in \bZ \text{ and }  i \in \{1, \dots, n \} \right\}
\end{align}
and write
\begin{align}\label{equation:bfY}
    \bfY \coloneqq \bfX_I
\end{align}
(or
$
\bfY_{\bsr,\bslambda} \coloneqq \left( \bfX_{\bsr,\bslambda} \right)_I
$
when we want to make the dependence on $\bsr$ and $\bslambda$ explicit).
This choice is motivated
by the fact that
for any $k \in \bZ$,
the set
$
  \left\{
    \veca \in I
  \relmid
    k \vecc \preceq \veca \preceq (k+1) \vecc
  \right\}
$
can naturally be identified with the set of vertices
of the quiver in~\pref{fg:orbifold projective line},
and we have the following:

\begin{theorem}\label{th:derived equivalence}
For any
$
\cI \in \Gr_{n-2} \left( \bfe_1 \bfk Q \bfe_0 \right),
$
one has a quasi-equivalence
\begin{align}\label{eq:derived equivalence}
  D^b \coh \bfY \simeq D^b \module \Gamma
\end{align}
of dg categories,
where $\Gamma$ is defined in~\pref{eq:Gamma}.
\end{theorem}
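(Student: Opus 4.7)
The plan is to apply Theorem~\ref{th:Qcoh Y} with $K = I$ and then exhibit a tilting object in $D^b \coh \bfY$ whose endomorphism algebra is $\Gamma$. First I check the fundamental-domain hypothesis: every element of $I$ is of the form $m\vecx_i$ for some $i$, and the unique decomposition $m\vecx_i = (m \bmod r_i)\vecx_i + \lfloor m/r_i\rfloor\vecc$ shows that $I_0 = \{m\vecx_i \mid 0 \le m < r_i,\ 1 \le i \le n\}$ (with the $m = 0$ element identified across $i$) is a fundamental domain for $\vecc$-translation on $I$. Theorem~\ref{th:Qcoh Y} then supplies $\Qcoh \bfY \simeq \Qcoh \cA_\bfY$, which restricts to $D^b \coh \bfY \simeq D^b \coh \cA_\bfY$ since $\cA_\bfY$ is coherent over $\cO_{\bP^1}$.

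Next I would construct the tilting object $T = \bigoplus_{\veca \in V} \cO_\bfY(\veca)$ with $V = I_0 \cup \{\vecc\}$; the observation just above the theorem statement identifies $V$ with the vertex set of the quiver $Q$ in~\pref{fg:orbifold projective line}. The identification $\End_\bfY(T) \simeq \Gamma$ is essentially built into the construction of $\sfS_I$ from $\cI$: morphisms $\cO_\bfY(\veca) \to \cO_\bfY(\vecb)$ are computed by the degree-$(\vecb-\veca)$ piece of the path algebra $\bfk Q$ modulo the ideal generated by $\cI$ under the linear map described after~\pref{eq:canonical relations}, and composition corresponds to concatenation of paths, so the total endomorphism algebra recovers $\bfk Q/\cI = \Gamma$.

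The substantive step is to verify that $T$ is actually a tilting object, i.e.\ $\mathrm{Ext}^{>0}_\bfY(T,T) = 0$ and $T$ generates $D^b \coh \bfY$. Vanishing of $\mathrm{Ext}^i$ for $i \ge 2$ is automatic once one establishes that $\cA_\bfY$ has cohomological dimension at most one over $\bP^1$, reflecting the one-dimensional nature of $\bfY$. The main obstacle is $\mathrm{Ext}^1$-vanishing, which must hold uniformly in $\cI \in \Gr_{n-2}(\bfe_1 \bfk Q \bfe_0)$. I would prove this by an upper semicontinuity argument along the Grassmannian: on the locus where $\cI$ arises from a configuration $\bslambda$ the vanishing is classical, following from the Geigle--Lenzing full strong exceptional collection~\cite{MR915180}; since this locus is dense and the Euler characteristic $\chi(T,T)$ is constant in the resulting flat family of coherent $\cA_\bfY$-modules, upper semicontinuity of $\dim \mathrm{Ext}^1$ forces it to vanish on the whole Grassmannian. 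Generation of $D^b \coh \bfY$ by $T$ is routine using $\vecc$-periodicity, and Bondal--Rickard then yields the quasi-equivalence with $D^b \module \Gamma$.
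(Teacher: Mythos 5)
Your overall strategy is the same as the paper's: reduce to Theorem~\ref{th:Qcoh Y}, pass to $\cA_\bfY$, and exhibit $T = \bigoplus_{0 \preceq \veca \preceq \vecc} \cO_\bfY(\veca)$ as a tilting object with endomorphism algebra $\Gamma$ (this is precisely Theorem~\ref{th:FSEC}, of which Theorem~\ref{th:derived equivalence} is stated to be an immediate consequence). The fundamental-domain check and the generation-via-$\vecc$-periodicity step both match. However, your $\mathrm{Ext}$-vanishing arguments have two problems.

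First, a minor imprecision: the global dimension of $\cA_\bfY$ is \emph{not} at most one. Theorem~\ref{th:hdOQ} only gives a bound of two, and the example in Section~\ref{sc:A_Y quiver} exhibits a module of homological dimension exactly two. The correct reason for $\mathrm{Ext}^{\ge 2}(T,T) = 0$ is not a global dimension bound on $\cA_\bfY$ but rather that the summands $\bfe_\veca \cA_\bfY$ are (sheaves of) projective $\cA_\bfY$-modules, so $\mathbf{R}\mathcal{H}om_{\cA_\bfY}$ between them is concentrated in degree zero and $\mathrm{Ext}^\bullet$ reduces to $\bfR\Gamma$ on $\bP^1$, which vanishes in degrees $\ge 2$.

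Second, and more seriously, the upper-semicontinuity argument for $\mathrm{Ext}^1$-vanishing does not close. Upper semicontinuity allows $\dim\mathrm{Ext}^1$ to \emph{jump up} on special fibers; vanishing on a dense open locus therefore does not propagate to the boundary. Constancy of the Euler characteristic does not help either: since $\dim\mathrm{Hom}$ is likewise only upper semicontinuous, both numbers can jump by the same amount while keeping $\chi = \dim\mathrm{Hom} - \dim\mathrm{Ext}^1$ fixed. To salvage this you would need to additionally establish constancy of $\dim\mathrm{Hom}(T,T)$ across the Grassmannian, which requires its own argument. The paper sidesteps all of this by computing directly: $\mathrm{Ext}^\bullet(\cO_\bfY(\veca), \cO_\bfY(\vecb)) \simeq \bfR\Gamma(\bP^1, \varpi_*\cO_\bfY(\vecb - \veca))$, and by Proposition~\ref{pr:OQbasis} the sheaf $\varpi_*\cO_\bfY(\vecb - \veca)$ is a sum of line bundles $\cO_{\bP^1}(-D_\gamma)$ indexed by acyclic paths, whose degrees depend only on the combinatorics of the labeled quiver (i.e.\ on $\bsr$) and not on $\cI$. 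So the cohomology computation is literally uniform in $\cI$, with no degeneration argument needed. You should replace the semicontinuity step with this direct degree computation.
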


Therefore,
there are two `geometric' interpretations
of the points on the boundary of the compact moduli space
of orbifold projective lines.
One is `commutative' and given by the stack $\bfX_{\bsr,\bslambda}$,
and the other is `noncommutative' and given by the noncommutative variety
$\bfY_{\bsr,\bslambda}$.
Both are closely related to the quiver $\Gamma$ with relations.
The former is the fine moduli stack of refined representations of $\Gamma$,
and the latter is derived equivalent to $\Gamma$.
Moreover,
any choice of a subset $K$ of the Picard group $L$
containing $\bZ \vecc$
and invariant under translation by $\vecc$
produces a noncommutative variety $(\bfX_{\bsr, \bslambda})_K$
with an equivalence \pref{eq:Qcoh X},
and $\bfX_{\bsr,\bslambda}$
and $\bfY_{\bsr,\bslambda}$
are given by $K=L$ and $K=I$ respectively.

Now we ask if the constructions
of $\bfX_{\bsr,\bslambda}$
and $\bfY_{\bsr,\bslambda}$
can be generalized
to curves which are not necessarily rational.
The right hand side of~\pref{eq:fiber product} is the fiber product
in the category of stacks,
and makes sense in much greater generality.
With the fact that
the orbifold projective line
$
\bfX_{\bsr,\bslambda}
$
is obtained from $\bP^1$
by the root construction
of orders $\bsr$
at the points $\bslambda$ in mind,
let $C$ be a smooth curve and
$\scrX_r$ be the stack
obtained from
$
C \times C
$
by the $r$-th root construction
along the diagonal.
For $i \in \{ 1, 2 \}$,
let $\pi_{i} \colon \scrX_r \to C$
be the structure morphism
$
\scrX_r \to C \times C
$
of the root construction
composed with the $i$-th projection.
Now
\begin{align}\label{eq:general scrX}
  \pi_\bsr \colon
  \scrX_\bsr
  \coloneqq \scrX_{r_1}
  \ \!_{\pi_1}{\times}_{\pi_1} \scrX_{r_2}
  \ \!_{\pi_1}{\times}_{\pi_1} \cdots
  \ \!_{\pi_1}{\times}_{\pi_1} \scrX_{r_n}
  \xto{\pi_2 \times \cdots \times \pi_2}
  C^n
\end{align}
gives
a flat (but non-smooth) family of proper orbifold curves,
which restricts to
a smooth family
\begin{align}\label{eq:scrX}
  \pi_\bsr|_{\pi_\bsr^{-1}(C^n \setminus \Delta)} \colon
    \pi_\bsr^{-1}(C^n \setminus \Delta)
    \to
    C^n \setminus \Delta
\end{align}
over the complement of the big diagonal.
The fiber of $\pi_\bsr$ over $\bslambda \in C^n$
will be denoted by $\bfX_{\bsr,\bslambda}$, generalizing the case where
\(
    C = \bP^1
\).

The construction of $\cA_\bfY$ also makes sense
in this generality.
Set
\begin{align} \label{eq:scrY_bsr}
  \cA_{\scrY_\bsr} \coloneqq \varpi_{ \ast } \cEnd \left( \bigoplus_{\bsa \in I_0} \cO_{\scrX_\bsr}(\bsa) \right)
\end{align}
where
$
\varpi \colon \scrX_\bsr \to C \times C^n
$
is the structure morphism to the coarse moduli scheme,
\begin{align}
  I_0 \coloneqq \bigcup_{k=1}^n \left\{ (a_i)_{i=1}^n \in \bZ^n \relmid a_i = 0 \text{ for } i \ne k
  \text{ and } 0 \le a_k < r_k \right\},
\end{align}
and
\begin{align}
  \cO_{\scrX_\bsr}(\bsa) \coloneqq \cO_{\scrX_{r_1}}(a_1) \boxtimes
  \cdots \boxtimes \cO_{\scrX_{r_n}}(a_n)
\end{align}
is the exterior tensor product
of the $a_i$-th tensor powers
of the universal bundles
on the root stacks $\scrX_{r_i}$.
The symbol $\scrY_\bsr$ alone does not make sense,
and is meant to denote the family of noncommutative algebraic varieties
over $C^n$ such that
\(
   \Qcoh \scrY _{ \bsr }
   =
    \Qcoh \cA _{ \scrY _{ \bsr } }
\).
For $\bslambda \in C^n$,
we write the restriction of $\cA_{\scrY_\bsr}$
to the fiber $C \times \bslambda$ over $\bslambda$
as $\cA_{\bfY_{\bsr,\bslambda}}$.
It is clear from \pref{eq:A_XK} and \pref{eq:scrY_bsr}
that this $\cA_{\bfY_{\bsr,\bslambda}}$
is a generalization of that
in \eqref{equation:bfY}
to the case where $C$ may not be $\bP^1$.

\begin{theorem}\label{th:main}
For any smooth curve $C$,
any positive integer $n$,
and
any $\bsr \in \left( \bZ^{> 1} \right)^n$,
one has the following:
\begin{enumerate}[1.]
\item\label{it:flat}
The sheaf
$\cA_{\scrY_{\bsr}}$
of
$\cO_{C \times C^n}$-algebras
is flat over $C^n$.
\item\label{it:equivalent}
For any $\bslambda \in C^n \setminus \Delta$,
one has an equivalence
$
\Qcoh \cA_{\bfY_{\bsr,\bslambda}}
\simeq
\Qcoh \bfX_{\bsr,\bslambda}.
$
\item\label{it:smooth}
For any $\bslambda \in C^n$,
the category $\Qcoh \cA_{\bfY_{\bsr,\bslambda}}$
has finite homological dimension.
\end{enumerate}
\end{theorem}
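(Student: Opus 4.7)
I will address the three parts in sequence, with the main difficulty concentrated in Part~\pref{it:smooth}.

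For Part~\pref{it:flat}, decompose
\[
\cA_{\scrY_\bsr} = \bigoplus_{\bsa, \bsb \in I_0} \varpi_{ \ast } \cO_{\scrX_\bsr}(\bsb - \bsa).
\]
Since $\scrX_\bsr$ is the iterated $\pi_1$-fiber product of the root stacks $\scrX_{r_i}$, each summand factors as an external tensor product on $C \times C^n$ of the pushforwards $\varpi_{r_i,*} \cO_{\scrX_{r_i}}(m)$, each of which is locally free of finite rank on $C \times C$ by a local computation in the Kummer presentation of the root stack. Hence $\cA_{\scrY_\bsr}$ is locally free on $C \times C^n$, and in particular flat over $C^n$.

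For Part~\pref{it:equivalent}, first verify directly from \pref{eq:L} and \pref{eq:I} that
\[
I_0 = \{0\} \sqcup \bigsqcup_{i=1}^n \{m \vecx_i : 1 \le m \le r_i - 1\}
\]
is a fundamental domain for $\vecc$-translation on $I$. Theorem~\pref{th:Qcoh Y} then supplies $\Qcoh \cA_{\bfY_{\bsr, \bslambda}} \simeq \Qcoh \bfX_{\bsr, \bslambda, I}$. For $\bslambda \in C^n \setminus \Delta$ the stack $\bfX_{\bsr, \bslambda}$ is a smooth tame DM curve with cyclic isotropy $\mu_{r_i}$ at $\lambda_i$, and the line bundles $\cO_\bfX(m \vecx_i)$ for $0 \le m \le r_i - 1$ realize every character of $\mu_{r_i}$. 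Hence $\bigoplus_{\veca \in I_0} \cO_\bfX(\veca)$ is a generating sheaf, and the standard Morita-type equivalence for tame DM stacks with a generating sheaf identifies $\Qcoh \bfX_{\bsr, \bslambda, I}$ with $\Qcoh \bfX_{\bsr, \bslambda}$.

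For Part~\pref{it:smooth}, the case $\bslambda \in C^n \setminus \Delta$ follows from Part~\pref{it:equivalent}, since a smooth orbifold curve has homological dimension one. For $\bslambda \in \Delta$, combine Theorem~\pref{th:Qcoh Y} with Theorem~\pref{th:derived equivalence} to obtain
\[
D^b \coh \cA_{\bfY_{\bsr, \bslambda}} \simeq D^b \coh \bfY_{\bsr, \bslambda} \simeq D^b \module \Gamma_{\bsr, \bslambda},
\]
where $\Gamma_{\bsr, \bslambda} = \bfk Q / \cI_{\bsr, \bslambda}$ is finite-dimensional. It thus suffices to bound the global dimension of $\Gamma_{\bsr, \bslambda}$ uniformly in $\bslambda$.

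This final step is the main obstacle. I would show that for every $\bslambda \in C^n$ the $(n-2)$ relations in \pref{eq:canonical relations} remain linearly independent in $\bfe_1 \bfk Q \bfe_0$ and that no nonzero linear combination reduces to a single monomial path $a_{i r_i} \cdots a_{i1}$; both follow from the fact that the coefficient matrix of $a_{3 r_3} \cdots a_{3 1}, \ldots, a_{n r_n} \cdots a_{n 1}$ in the relations is the identity. These two genericity conditions place $\cI_{\bsr, \bslambda}$ in the natural open subscheme of $\Gr_{n-2}(\bfe_1 \bfk Q \bfe_0)$ on which the quotient $\Gamma_{\bsr, \bslambda}$ behaves like a Ringel canonical algebra, and an explicit construction of length-$2$ minimal projective resolutions of its simple modules then yields global dimension exactly $2$, completing Part~\pref{it:smooth}.
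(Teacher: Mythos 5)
Parts 1 and 2 are essentially fine. Your Part 1 coincides with the paper's argument: $\cA_{\scrY_\bsr}$ is $\cO_{C\times C^n}$-locally free, hence flat over $C^n$. Your Part 2 is a correct alternative route: the paper obtains the equivalence by localizing near each (distinct) stacky point and applying \pref{th:Qcoh Y} for a single root (the ``$n=1$'' case), staying inside its own $K$-algebra formalism, whereas you appeal to the Morita equivalence attached to a generating sheaf on the tame DM stack $\bfX_{\bsr,\bslambda}$. Either works; the paper's version is self-contained, yours outsources the step to standard DM-stack machinery.

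Part 3 has a genuine gap. The paper proves it as a special case of \pref{th:hdOQ}, a statement about the path algebra $\cO_C\cQ$ of a reduced $\Div_\eff C$-labelled quiver with transverse cycles; the proof (\pref{pr:hdOQ local} and \pref{pr:hdOQ local to global}) writes down explicit length-two projective resolutions of the simple modules over the stalks at closed points and globalizes, and it works for an arbitrary smooth curve $C$ and all $\bslambda\in C^n$ at once. You instead reduce the boundary case $\bslambda\in\Delta$ to the finite global dimension of $\Gamma=\bfk Q/\cI_{\bsr,\bslambda}$ via \pref{th:derived equivalence}. That reduction is only available when $C=\bP^1$ (the standing hypothesis of \pref{sc:FSEC}), so it does not cover the general smooth curve $C$ appearing in the statement. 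More importantly, the claim that $\Gamma_{\bsr,\bslambda}$ has global dimension two for every $\bslambda\in C^n$, including $\bslambda\in\Delta$, is precisely the technical heart of the theorem; you assert it as the output of ``an explicit construction of length-$2$ minimal projective resolutions'' without carrying the construction out. Your genericity observations (linear independence of the relations, no degeneration to a single monomial) are true but do not by themselves produce such resolutions. You would further need to justify that finiteness of homological dimension of the Grothendieck category $\Qcoh\cA_{\bfY_{\bsr,\bslambda}}$ follows from the finite global dimension of the finite-dimensional algebra $\Gamma$ under a mere bounded derived equivalence, a step you pass over. In effect your Part 3 replaces \pref{th:hdOQ} by an equivalent but unproven claim about $\Gamma$, so the key computation is missing.
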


\pref{th:main} can be compared with the existence
of a `degenerate' noncommutative cubic surface
of finite homological dimension
whose commutative counterpart is a singular cubic surface.
Unlike a noncommutative crepant resolution of a quotient singularity,
the stack $\bfX_{\bsr,\bslambda}$ is singular
whereas the `noncommutative contraction' $\bfY_{\bsr,\bslambda}$,
which is `below' $\bfX_{\bsr,\bslambda}$, is smooth.

The existence of distinct flat extensions $\Qcoh \scrX_\bsr$ and $\Qcoh \scrY_\bsr$
of the family~\pref{eq:scrX}
of abelian categories over $C^n \setminus \Delta$
illustrates the non-separatedness
of the `moduli space of abelian categories'.
When the coarse moduli space of $\bfX$ is $\bP^1$,
these extensions come from
two subsets of $L \simeq \Pic \bfX$.
While these subsets make sense
only when the coarse moduli space of $\bfX$ is $\bP^1$,
there is a third choice which makes sense
for any curve, namely, $\bZ \vecomega \subset L$
for the dualizing element $\vecomega$.
This choice is canonical,
and leads to a modular compactification
of the moduli stack of orbifold curves.
This is a `purely commutative' story,
which is discussed in a separate paper~\cite{ACOU1}.

This paper is organized as follows:
In~\pref{sc:stack},
we prove~\pref{lm:fiber_product},
of which~\pref{th:fiber product} is an immediate consequence.
In~\pref{sc:affine line},
we discuss the sheaves of algebras
$\cA_\bfX$ and $\cA_\bfY$
in the case when $C$ is an affine line
to illustrate their definitions
and to motivate the introduction
of the path algebra of a quiver
labeled by effective divisors
in~\pref{sc:path algebras},
which can describe $\cA_\bfY$
as shown in~\pref{sc:A_Y quiver}.
Theorems~\ref{th:main},~\ref{th:Qcoh Y},~and~\ref{th:derived equivalence}
are shown in Sections~\ref{sc:hdOQ},~\ref{sc:I-algebra},~and~\ref{sc:FSEC}
respectively.

%
%------------------------------------------------------------------------
%
\subsection{Notations and Conventions}

We will work over an algebraically closed field
$\bfk$
of characteristic zero
throughout the paper.
In particular,
all schemes, stacks, and their isomorphisms are defined over $\bfk$,
and all (dg) categories and functors are linear over $\bfk$.

\subsection*{Acknowledgements}
We thank the anonymous referees for reading the paper carefully
and suggesting many improvements.
During the preparation of this work,
D.~C.~was partially supported by the Australian Research Council Discovery Project grant DP220102861, 
S.~O.~was partially supported by
JSPS Grants-in-Aid for Scientific Research
(16H05994,
16H02141,
16H06337,
18H01120,
19KK0348,
20H01797,
20H01794,
21H04994),
and
K.~U.~was partially supported by
JSPS Grants-in-Aid for Scientific Research
(21K18575).

%
%------------------------------------------------------------------------
%
\section{Fiber products of global quotients}\label{sc:stack}

\begin{lemma}\label{lm:fiber_product}
Let 
$
\bY_i = \left[ X_i \middle/ G_i \right]
$
for $i=0,\ldots,n$
be the stack quotients of schemes $X_i$
by actions of algebraic groups $G_i$.
Let further
$
X_i \to X_0
$
for $i=1,\ldots,n$
be morphisms of schemes
that are intertwined by group homomorphisms
$
G_i \to G_0
$
yielding morphisms of stacks
$
\bY_i \to \bY_0.
$
Then one has an isomorphism
\begin{align}\label{equation:fiber product is a quotient stack}
  \bY_1 \times_{\bY_0} \cdots \times_{\bY_0} \bY_n
  \simto \bY \coloneqq \left[ X \middle/ G \right]
\end{align}
of stacks
where
$
X \coloneqq X_1 \times_{X_0} \cdots \times_{X_0} X_n
$
and
$
G \coloneqq G_1 \times_{G_0} \cdots \times_{G_0} G_n.
$
\end{lemma}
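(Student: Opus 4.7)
The plan is to establish~\pref{equation:fiber product is a quotient stack} by constructing mutually inverse 1-morphisms, working throughout with the standard functor-of-points description in which a $T$-point of a quotient stack $[Z/H]$ is a pair $(P,f)$ consisting of an $H$-torsor $P \to T$ together with an $H$-equivariant map $f \colon P \to Z$.

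For the forward direction, each projection $X \to X_i$ is equivariant along the corresponding projection $G \to G_i$, and so descends to a morphism $\bY \to \bY_i$. The $n$ compositions $\bY \to \bY_i \to \bY_0$ are canonically 2-isomorphic, because they all factor through the common projections $G \to G_0$ and $X \to X_0$ of the fiber products. The universal property of the fiber product therefore produces a morphism
\begin{align}
  \Phi \colon \bY \to \bY_1 \times_{\bY_0} \cdots \times_{\bY_0} \bY_n.
\end{align}

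In the reverse direction, a $T$-point of the fiber product amounts to a collection $\left( P_i, f_i \right)_{i=1}^{n}$ of $G_i$-torsors on $T$ with equivariant maps $f_i \colon P_i \to X_i$, a common $G_0$-torsor $Q$ on $T$ with a map $g \colon Q \to X_0$, and $G_0$-equivariant isomorphisms $\phi_i \colon P_i \times^{G_i} G_0 \simto Q$ intertwining $g$ with the maps induced by the $f_i$. To such a datum I would assign
\begin{align}
  P \coloneqq P_1 \times_Q \cdots \times_Q P_n,
\end{align}
where the structure maps $P_i \to Q$ are the composites of the canonical $P_i \to P_i \times^{G_i} G_0$ with $\phi_i$. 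The componentwise actions of the $G_i$ restrict to a $G$-action on $P$, and the $f_i$ patch into a $G$-equivariant morphism $P \to X$, producing the candidate inverse $\Psi$.

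The main technical step will be to show that $P \to T$ is a $G$-torsor. Since being a torsor is fppf-local on the base, one passes to a cover $T' \to T$ over which $Q$ and all $P_i$ become trivial; the $\phi_i$ allow compatible trivializations so that each $P_i|_{T'} \to Q|_{T'}$ coincides with the map $T' \times G_i \to T' \times G_0$ induced by $G_i \to G_0$, and then $P|_{T'}$ is identified with $T' \times \left( G_1 \times_{G_0} \cdots \times_{G_0} G_n \right) = T' \times G$, the trivial $G$-torsor. Granted this, the verification that $\Phi$ and $\Psi$ are mutually inverse reduces to the natural isomorphisms $\left( P_1 \times_Q \cdots \times_Q P_n \right) \times^G G_i \simeq P_i$ and, for a $G$-torsor $P$ with associated $P_i = P \times^G G_i$ and $Q = P \times^G G_0$, an isomorphism $P_1 \times_Q \cdots \times_Q P_n \simeq P$; both are fppf-local instances of the defining identity $G_1 \times_{G_0} \cdots \times_{G_0} G_n = G$.
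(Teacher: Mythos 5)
Your proof takes the same approach as the paper's, with the two constructions presented in the opposite order: you build the inverse $\Phi$ via the universal property of the fiber product and the change-of-group maps $P \mapsto P \times^G G_i$, exactly as the paper does, and you build the forward map $\Psi$ by forming the iterated fiber product $P_1 \times_Q \cdots \times_Q P_n$, which is the paper's limit diagram reorganized around the common $G_0$-torsor $Q$.

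One caveat, which you share with the paper: the claim that $P_1 \times_Q \cdots \times_Q P_n$ is a $G$-torsor is not automatic. Your compatible-trivialization step implicitly assumes that the trivializations of the $P_i$ can be adjusted so that the $\phi_i$ all induce the same trivialization of $Q$; this requires lifting sections $T' \to G_0$ along $G_i \to G_0$ after a further cover, which works only when $G_i \to G_0$ is fppf (or at least surjective). Without such a hypothesis the lemma itself is false: take $X_0 = X_1 = X_2 = \Spec \bfk$, $G_0 = \Gm$, and $G_1 = G_2$ trivial; then the 2-fiber product $\Spec \bfk \times_{[\Spec \bfk/\Gm]} \Spec \bfk$ is $\Gm$, whereas $[X/G] = \Spec \bfk$. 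In the paper's application $G_i \to G_0$ is $\alpha \mapsto \alpha^{r_i}$, which is fppf, so no harm is done there, but since the paper also glosses over this step, it is worth stating the hypothesis explicitly rather than asserting the trivializations can always be made compatible.
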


See,
e.g.,
\cite[\href{https://stacks.math.columbia.edu/tag/003O}{Tag 003O}]{stacks-project} for the definition of fiber products.

\begin{proof}
    We begin with defining the morphism~\eqref{equation:fiber product is a quotient stack} as a functor between categories fibered in groupoids over the category of
    \(
       \bfk
    \)-schemes.

    To define the action of the functor on objects, fix a test scheme $S$.
    An object of the left hand side of~\eqref{equation:fiber product is a quotient stack} over
    \(
       S
    \)
    is an
    \(
       n
    \)-tuple whose
    \(
       i
    \)-th entry is as follows:

    \begin{itemize}
        \item 
        For
        \(
           i = 1, \dots, n
        \)
        a principal
        \(
           G _{ i }
        \)-bundle
        \begin{align}
            P _{ i } \to S
        \end{align}
        
        \item
        For
        \(
           i = 1, \dots, n
        \)
        a
        \(
           G _{ i }
        \)-equivariant morphism        
        \begin{align}
            f _{ i } \colon P _{ i } \to X _{ i }
        \end{align}

        \item
        An isomorphism of principal
        \(
           G _{ 0 }
        \)-bundles
        \begin{align}
            \varphi _{ i j }
            \colon
            P _{ j }
            \times
            ^{
                G _{ j }
            }
            G _{ 0 }
            \simto
            P _{ i }
            \times
            ^{
                G _{ i }
            }
            G _{ 0 }
        \end{align}
        for
        \(
           i, j = 1, \dots, n
        \)
        satisfying the following conditions.
        \begin{align}\label{equation:conditions for varphi}
            \varphi _{ i i }
            =
            \id
            \\
            \varphi
            _{
                i j
            }
            \varphi
            _{
                j k
            }
            \varphi
            _{
                k i
            }
            =
            \id
            \\
            f _{ i }
            \varphi
            _{
                i j
            }
            =
            f _{ j }
        \end{align}
    \end{itemize}
    
    Let
    \(
       e _{ 0 }
       \in
       G _{ 0 } ( \bfk )
    \)
    be the identity. Take
    \(
       P
    \)
    to be the limit of the diagram consisting of
    \(
       \varphi
       _{
        i j
       }
    \)
    for
    \(
       i, j
       =
       1, \dots, n
    \)
    and
    \(
       P
       _{
        i
       }
       \xrightarrow{
        \id _{ P _{ i } }
        \times
        e _{ 0 }
       }
       P _{ i }
       \times
       ^{
           G _{ i }
       }
       G _{ 0 }
    \)
    for
    \(
       i = 1, \dots, n
    \).
    Then
    \(
       P
    \)
    admits a standard structure of a principal
    \(
       G
    \)-bundle over
    \(
       S
    \)
    together with a
    \(
       G
    \)-equivariant morphism
    \begin{align}
        f
        \colon
        P
        \to
        X.
    \end{align}
    Thus we have obtained an object on the right hand side of~\eqref{equation:fiber product is a quotient stack} over
    \(
       S
    \).

    We now move on to morphisms. A morphism
    \begin{align}\label{equation:morphism}
        \left(
            P _{ i },
            f _{ i },
            \varphi _{ i j }
           \right)
           \to
           \left(
            P ' _{ i },
            f ' _{ i },
            \varphi ' _{ i j }
           \right)       
    \end{align}
    in the fiber category of the left hand side of~\eqref{equation:fiber product is a quotient stack}
    over
    \(
       S
    \)    
    is an $n$-tuple of $G_i$-bundle isomorphisms
    \(
       \left(
        \psi _{ i }
        \colon
        P _{ i }
        \to
        P ' _{ i }
       \right)
       _{
        i = 1, \dots, n
       }
    \)
    such that
    \(
       f ' _{ i } \psi _{ i }
       =
       f _{ i }
    \).

    If we let
    \(
       f \colon P \to X
    \)
    and
    \(
       f ' \colon P ' \to X
    \)
    be the images of the source and the target of the morphism~\eqref{equation:morphism} under~\eqref{equation:fiber product is a quotient stack}, then the morphisms
    \(
       \psi _{ i }
    \)
    induce an isomorphism of principal
    \(
       G
    \)-bundles
    \(
       \psi \colon P \to P '
    \)
    satisfying
    \(
       f ' \psi = f
    \).

    We now define a morphism which we will show is the inverse of~\eqref{equation:fiber product is a quotient stack}, based on that the left hand side of~\eqref{equation:fiber product is a quotient stack} is the limit of a diagram of stacks.

    In order to define the morphism
    \(
        p _{ i }
        \colon
       \bY
       \to
       \bY _{ i }
    \)
    for
    \(
       i = 1, \dots, n
    \),
    take an object of $\bY$ over $S$; i.e., a $G$-bundle
    \(
       P \to S
    \)
    together with an $G$-equivariant morphism
    \(
       f
       \colon
       P \to X
    \).
    To such data, via the projection
    \(
       G \to G _{ i }
    \),
    we associate the
    \(
       G _{ i }
    \)-bundle
    \(
       P _{ i }
       \coloneqq
       P
       \times
       ^{
        G
       }
       G _{ i }
    \)
    and the
    \(
       G _{ i }
    \)-equivariant morphism defined as follows:
    \begin{align}
        f _{ i }
        \colon
        P _{ i }
        \xrightarrow{
         f
         \times
         \id
         _{
             G _{ i }
         }
        }
        X
        \times
        ^{
         G
        }
        G _{ i }
        \to
        X _{ i }
    \end{align}
    Thus we have obtained the map $\bY(S) \to \bY_i(S)$. It is straightforward to confirm that morphisms of the category
    \(
       \bY
    \)
    naturally induce those of
    \(
       \bY _{ i }
    \), so we omit the details.

    We also have to construct for
    \(
       1
       \le
       i \neq j
       \le
       n
    \)
    a natural isomorphism
    \(
       q _{ i } p _{ i }
       \Rightarrow
       q _{ j } p _{ j }
    \).
    This is obtained from the canonical isomorphism
    \begin{align}
        P _{ i }
        \times
        ^{
            G _{ i }
        }
        G _{ 0 }
        \simto
        P _{ j }
        \times
        ^{
            G _{ j }
        }
        G _{ 0 }.
    \end{align}

    It remains to show that the morphism of stacks we have just constructed is inverse to~\eqref{equation:fiber product is a quotient stack}, which is straightforward and hence left to the reader.
    % The composite $\bY_1 \times_{\bY_{0}} \cdots \times_{\bY_{0}} \bY_n \to \bY \to \bY_1 \times_{\bY_{0}} \cdots \times_{\bY_{0}} \bY_n$ is 2-isomorphic to the identity by universal property of the fiber product. It remains to show that the other composite is 2-isomorphic to the identity. The $G$-bundle $P$ admits a morphisms to the fiber product $P'\coloneqq P_1' \times_{P_0'} \cdots \times_{P_0'} P_n'$ by the fiber product universal property. Now $P$ itself is a fiber product and hence we have a morphism $P' \to P$. The universal property of fiber products applied to $P \to P' \to P$ gives that it is an isomorphism defining a 2-isomorphism with the identity.
\end{proof}

\begin{proof}[Proof of~\pref{th:fiber product}]
For $i \in \{ 1, \ldots, n \}$
one has
$
\bfX_{r_i,\lambda_i}
\simeq
\bP
\left(
    1,
    r _{ i }
\right)
\simeq
\left[ \left( \bA^2 \setminus \bszero \right) \middle/ \Gm \right]
$
where $\Gm$ acts on $\bA^2$ by
$
\Gm \ni \alpha \colon (x,y) \mapsto (\alpha x, \alpha^{r_i} y).
$
The morphism
$
\bfX_{r_i,\lambda_i} \to \bP^1
$
is given by
$
(x,y) \mapsto (y, x^{r_1})
$
if $i=1$
and
$
(x,y) \mapsto (x^{r_i} + \lambda_i y, y)
$
if $i \in \{ 2, \ldots, n \}$.
Now we apply~\pref{lm:fiber_product}
to $X_i = \bA^2 \setminus \bszero$
and $G_i = \Gm$ for $i \in \{ 0, \ldots, n \}$.
The group $L^\dual$ is isomorphic
to the fiber product of the morphisms
$\Gm \to \Gm$, $\alpha \mapsto \alpha^{r_i}$
for $i \in \{ 1, \ldots, n \}$.
It remains to show
\begin{equation}\label{eq:fib_prod1}
  \left( \bA^2 \setminus \bszero \right) \times _{ \bA^2 \setminus \bszero } \cdots
  \times _{ \bA^2 \setminus \bszero } \left( \bA^2 \setminus \bszero \right)
  \simeq
  \Spec S \setminus \bszero,
\end{equation}
where $S$ is defined in~\pref{eq:S}.
\pref{eq:fib_prod1} follows from
\begin{equation}\label{eq:fib_prod2}
  \bA^2  \times _{ \bA^2 } \cdots
  \times _{ \bA^2 } \bA^2
  \simeq
  \Spec S.
\end{equation}
The left hand side of~\pref{eq:fib_prod2}
is the spectrum of
\begin{align}
  \bfk[x_1,y_1] \otimes_{\bfk[x_0, y_0]} \cdots \otimes_{\bfk[x_0, y_0]} \bfk[x_n,y_n],
\end{align}
which is isomorphic to the quotient of
$
\bfk[x_1,y_1,\ldots,x_n,y_n]
$
by the ideal
\begin{align}
(x_1^{r_1} - y_2)
+(y_1 - x_2^{r_2})
+\left( y _{ 1 } - (x_i^{r_i}+\lambda_i y_i),
x_1^{r_1} - y_i \right)_{i=3}^n.
\end{align}
Note that this ideal also contains
$
y_i - x_2^{r_2}
$
for $i \in \{ 3, \ldots, n \}$.
By eliminating all the $y_i$'s,
we see that this ring is isomorphic to $S$,
and~\pref{th:fiber product} is proved.
\end{proof}

% \xr{Comment on the equivalence
% $
%  \coh \bfX _I \simeq \qgr \check{Cox}
% $,
% and the fact that $\check{Cox}$ is the $G$-algebra we are looking on.
% }

%
%------------------------------------------------------------------------
%
\section{Sheaves of algebras on the affine line}\label{sc:affine line}

In this section,
we describe the sheaves of algebras
$\cA_\bfX$ and $\cA_\bfY$
for the toy model
\begin{align}
  C
  = \bA^1_z
  \coloneqq \Spec \bfk[z]
\end{align}
to illustrate their definitions given in~\pref{sc:introduction}
and to motivate the construction in~\pref{sc:path algebras}.
Choose
\(
   r
   \in
   \bZ
   ^{
    > 1
   }
\)
and
\(
   \lambda
   \in
   \bfk
\).
Consider the group action
\(
   \bmu _{ r } \curvearrowright \bA ^{ 1 } _{ x }
\)
defined by
\begin{align}\label{equation:group action}
  \bmu_r \ni \zeta \colon x \mapsto \zeta (x-\lambda) + \lambda
\end{align}
and let
\begin{align}
  \bfX_{r,\lambda} = \left[ \Spec \bfk[x] \middle/ \bmu_r \right],
  \\
  \cA_\bfX = \cA_\bfY = \bfk[x] \rtimes \bmu_r\label{equation:cAbfX}.
\end{align}
The right hand side of~\eqref{equation:cAbfX} is the skew group ring with respect to the action~\eqref{equation:group action}.
The coarse moduli of
\(
    \bfX_{r,\lambda}
\)
is as follows, which at the same time depicts
\(
    \bfX_{r,\lambda}
\)
as the root stack of
\(
    \Spec \bfk[z]
\)
ramified of order
\(
   r
\)
at the origin.
\begin{align}
  \varpi \colon \bfX_{r,\lambda} \to \Spec \bfk[z],
  \quad
  z \mapsto (x-\lambda)^r
\end{align}

Our assumption that $\bfk$ is an algebraically closed field
of characteristic zero implies the isomorphism
$
    \bfk \left[ \bmu_r \right]
    \simeq
    \bfk^{\times r}
$
as $\bfk$-algebras and
\begin{align}
  \bfk[x] \rtimes \bmu _{ r }
  \simeq
  \bfk[x]^{\oplus r}
\end{align}
as $\bfk[x]$-modules.

One has
$
\bfk[x] \simeq \bfk[z]^{\oplus r}
$
as $\bfk[z]$-modules,
so that
\begin{align}
  \bfk[x] \rtimes \bmu _{ r }
  \simeq
  \bfk[z]^{\oplus r^2}
\end{align}
as $\bfk[z]$-modules.
If we write the universal bundle on
\(
    \bfX_{r,\lambda}
\)
as the root stack as $\cO(1)$,
then one has
\begin{align}
  \bfk[x] \rtimes \bmu _{ r }
  \simeq
  \varpi_{ \ast } \cEnd \left( \bigoplus_{a=0}^{r-1} \cO(a) \right).
\end{align}
The $\bfk$-algebra
$
\bfk[x] \rtimes \bmu _{ r }
$
is described by a quiver such that vertices are the idempotents $\bfe_i \in \bfk \left[ \bmu_r \right]$
for $i \in \Hom \left( \bmu_r, \Gm \right) \simeq \bZ/r\bZ$
and there is one arrow from $\bfe_i$ to $\bfe_{i+1}$
where $1 \coloneqq \deg (x-\lambda) \in \Hom \left( \bmu_r, \Gm \right)$.

For
\(
   n
   \ge
   1
\)
the $\bfk$-algebra
\begin{align}
  \cA_\bfX \simeq \cA_{r_1,\lambda_1} \otimes_{\bfk[z]} 
  \cdots \otimes_{\bfk[z]} \cA_{r_n,\lambda_n}
\end{align}
is described by a quiver with vertices $\bfe_{\bsi}$
for
$
\bsi \in \prod_{k=1}^n \bZ/r_k \bZ
$
and arrows
$
a_{\bsi,j}
$
for $(\bsi,j) \in \prod_{k=1}^n \bZ/r_k \bZ \times \{ 1, \ldots, n \}$,
equipped with relations.
The arrow $a_{\bsi,j}$ goes from $\bfe_{\bsi}$ to $\bfe_{\bsi'}$
where $\bsi'$ is obtained from $\bsi$
by increasing the $j$-th component by $1$.
The algebra $\cA_\bfY$ is described
by the full subquiver consisting of vertices $\bfe_{\bsi}$
where $\bsi = (i_1,\ldots,i_n)$ runs over the subset of
$\prod_{k=1}^n \bZ / r_k \bZ$
such that $i_k$ is non-zero
for at most one $k \in \{ 1, \ldots, n \}$.
In order to describe $\cA_\bfY$ not as a $\bfk$-algebra
but as a $\bfk[z]$-algebra
(or a sheaf of $\cO_C$-modules),
we introduce the notion of
the \emph{path algebra of a $\Div_\eff C$-labeled quiver}
in~\pref{sc:path algebras} below.

%
%------------------------------------------------------------------------
%
\section{Path algebras of quivers with arrows labeled by effective divisors}\label{sc:path algebras}

A \emph{quiver}
$
Q = (Q_0,Q_1,s,t)
$
consists of a set $Q_0$ of \emph{vertices},
a set $Q_1$ of \emph{arrows},
and two maps $s,t \colon Q_1 \to Q_0$
called the \emph{source} and the \emph{target}.
The quiver $Q$ is said to be \emph{finite}
if the sets $Q_0$ and $Q_1$ are finite.
The \emph{path category} of the quiver
is the category with objects $Q_0$
freely generated by $Q_1$.
A \emph{path}
is a morphism of the path category.
The set of paths is denoted by $\cP$.

We consider only finite quivers unless otherwise specified.

\begin{definition}
  A \emph{labeling}
  of a quiver $Q = (Q_0,Q_1,s,t)$
  by a set $S$
  is a map
  \begin{align}
    D_{\bullet} \colon Q_1 \to S, \qquad
    \rho \mapsto D_\rho
  \end{align}
  of sets.
\end{definition}

If $S$ is a monoid,
then an $S$-labeling $D_\bullet$ induces
a functor from the path category of $Q$
to the category
with one object and the set $S$ of morphisms.
The resulting map
$
\cP \to S
$
of sets will be denoted by $D_\bullet$ again
by abuse of notation.

We henceforth
restrict ourselves
to labeling
by the monoid
$
G = \Div_\eff C
$
of effective Cartier divisors
on a scheme $C$.

\begin{definition}
  A \emph{cycle} is a non-identity endomorphism of an object
  of the path category, i.e.,
  a product
  $a_n \cdots a_2 a_1$
  of arrows
  $a_i \in Q_1$
  such that
  $s(a_{i+1}) = t(a_i)$
  for $i = 1, \ldots, n-1$
  and
  $s(a_1) = t(a_n)$.
  It is \emph{simple}
  if $s(a_1), s(a_2), \ldots, s(a_n)$
  are mutually distinct elements of $Q_0$.
\end{definition}

The set $\cP$ of paths is partitioned
into the disjoint union
of the set $\cP ^{ a }$ of acyclic (cycle-free) paths
and the set $\cP ^{ c }$ of paths
containing at least one cycle;
\begin{align}
\cP  = \cP ^{ a } \sqcup \cP ^{ c }.
\end{align}

Given a simple cycle $\rho$,
permuting the arrows cyclically naturally gives another simple cycle.

\begin{definition}\label{df:path algebra}
The \emph{path algebra} of a $\Div_\eff C$-labeled quiver
$
\cQ = (Q, D_{\bullet})
$
is the sheaf
\begin{align}\label{eq:path algebra}
  \cO_C \cQ
  \coloneqq
  \left. \left( \bigoplus_{\rho \in \cP } \cO _C(-D_{\rho}) \rho\right) \middle/ \cI \right.
\end{align}
of $\cO _C$-algebras,
where the multiplication is given by the concatenation of paths,
and the ideal $\cI$ is generated by the following relations:
For any vertex $v \in Q_0$
and any simple cycle $\rho$ at $v$,
we identify $\cO_C (-D_{\rho})\rho$
with $\cO_C (-D_{\rho})\bfe_v \subseteq \cO \bfe_v$,
where $\bfe_v$ is the trivial path at $v$.
\end{definition}

Since any simple cycle can be removed
by the relation $\cI$ in~\pref{eq:path algebra},
the inclusion and the projection gives a surjection
\begin{align}\label{eq:surjection to OQ}
  \bigoplus_{\rho \in \cP ^{ a }} \cO_C (-D_{\rho})\rho \to \cO_C \cQ
\end{align}
of $\cO_C$-modules.
The assumption that a quiver is finite implies that the set $\cP ^{ a }$
and hence the sheaf $\cO \cQ$ of $\cO_C$-algebras is also finite.

%
%------------------------------------------------------------------------
%
\section{The labeled quiver describing \texorpdfstring{$\cA_\bfY$}{AY}}\label{sc:A_Y quiver}

Fix a smooth curve $C$,
a positive integer $n$,
a sequence $\bsr = (r_i)_{i=1}^n$
of positive integers,
and a sequence $\bslambda = (\lambda_i)_{i=1}^n$
of points on $C$.
Let
$
\cA_\bfY
=
\cA_{\bfY_{\bsr,\bslambda}}
$
be the fiber
over $\bslambda \in C^n$
of the family
\pref{eq:scrY_bsr}
of sheaves of $\cO_C$-algebras.
Let further
$\cQ$
be the quiver obtained from the quiver in~\pref{fg:orbifold projective line}
by identifying the leftmost vertex $0$ with the rightmost vertex $1$,
equipped with the $\Div_\eff(C)$-labeling
defined by
\begin{align}
  D_{a_{ij}} =
  \begin{cases}
    \lambda_i & j = r_i, \\
    0 & \text{otherwise}.
  \end{cases}
\end{align}

Note that for each
\(
   1 \le i \le n
\)
and
\(
    1 \le j \le r _{ i }
\)
there is a standard isomorphism of line bundles
\begin{align}\label{eq:isomorphism}
    \cO _{ C }
    a _{
        i j
    }
    &
    \simto
    \varpi _{ \ast }
    \cO
    \left(
        \vecx _{ i }
    \right)
    \stackrel{
        \textrm{canonical}
    }{
        \simeq
    }
    \varpi _{ \ast }
    \cHom
    \left(
        \cO
        \left(
            ( j - 1 )
            \vecx _{ i }
        \right),
        \cO
        \left(
            j
            \vecx _{ i }
        \right)
    \right)
\end{align}
which sends
\(
   a _{ i j }
\)
to the tautological section of the line bundle
\(
   \cO
   \left(
    \vecx _{ i }
   \right)
\).
\pref{lm:A=OQ} below is an immediate consequence
of the definition of $\cO_C \cQ$ in~\pref{df:path algebra}.

\begin{lemma}\label{lm:A=OQ}
    The isomorphisms~\eqref{eq:isomorphism} induce the following isomorphism of $\cO_C$-algebras.
    \begin{align}
        \cO_C \cQ
        &
        \simto
        \cA_\bfY
    \end{align}
\end{lemma}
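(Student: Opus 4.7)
The plan is to extend the arrow-wise isomorphisms~\eqref{eq:isomorphism} to a full algebra homomorphism $\Phi \colon \cO_C \cQ \to \cA_\bfY$, verify that the simple-cycle relations $\cI$ are preserved, and conclude by checking that $\Phi$ is bijective on each hom-component.

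First, I identify the vertex set of $\cQ$ with $I_0$ via $0 = 1 \leftrightarrow \bszero$ and $(i, j) \leftrightarrow j \vecx_i$, so that the idempotents $\bfe_v \in \cO_C \cQ$ correspond to the projectors onto the summands of $\bigoplus_{\veca \in I_0} \cO_\bfX(\veca)$. For each arrow $a_{ij}$ with $1 \le j \le r_i - 1$, I send $a_{ij}$ to the tautological section of $\varpi_{ \ast } \cHom(\cO_\bfX((j-1)\vecx_i), \cO_\bfX(j\vecx_i)) \simeq \cO_C$ furnished by~\eqref{eq:isomorphism}; this is a direct summand of $\cA_\bfY$. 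For the closing arrow $a_{i, r_i}$, the naive target $r_i \vecx_i$ lies outside $I_0$, so I use the root-stack identification $\cO_\bfX(r_i \vecx_i) \simeq \varpi^{ \ast } \cO_C(\lambda_i)$ together with the projection formula to obtain
\[
\varpi_{ \ast } \cHom \left( \cO_\bfX((r_i - 1)\vecx_i), \cO_\bfX(\bszero) \right) \simeq \varpi_{ \ast } \cO_\bfX(\vecx_i) \otimes \cO_C(-\lambda_i) \simeq \cO_C(-\lambda_i),
\]
and send $a_{i, r_i}$ to the canonical generator of this sheaf. The coefficient $\cO_C(-\lambda_i)$ matches the label $\cO_C(-D_{a_{i, r_i}})$ in~\eqref{eq:path algebra}. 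Extending multiplicatively produces a morphism from $\bigoplus_{\rho \in \cP} \cO_C(-D_\rho) \rho$ to $\cA_\bfY$.

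Next, I verify that the simple-cycle relations $\cI$ are preserved. The image of the simple cycle $\rho_i = a_{i, r_i} \circ \cdots \circ a_{i, 1}$ is the composition in $\cA_\bfY$ of the images of $a_{i, 1}, \ldots, a_{i, r_i}$: the first $r_i - 1$ factors contribute the unit section $1 \in \cO_C$, while the last factor contributes the canonical generator of $\cO_C(-\lambda_i)$. The resulting endomorphism of $\cO_\bfX(\bszero)$ is therefore the canonical inclusion $\cO_C(-\lambda_i) \hookrightarrow \cO_C = \varpi_{ \ast } \cEnd(\cO_\bfX(\bszero))$, which is precisely how $\cO_C(-\lambda_i) \rho_i$ is identified with $\cO_C(-\lambda_i) \bfe_0 \subseteq \cO_C \bfe_0$ in~\pref{df:path algebra}. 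Hence $\Phi$ descends to a well-defined morphism of $\cO_C$-algebras.

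Finally, to check bijectivity, I decompose both sides by pairs $(\veca, \vecb) \in I_0 \times I_0$. On the left, the surjection~\eqref{eq:surjection to OQ} presents the $(\veca, \vecb)$-component as a quotient of $\bigoplus_{\rho \colon \veca \to \vecb} \cO_C(-D_\rho) \rho$ summed over acyclic paths. On the right, $\varpi_{ \ast } \cHom(\cO_\bfX(\veca), \cO_\bfX(\vecb))$ decomposes via the factorization $\cO_\bfX(\veca) = \boxtimes_k \cO_{\scrX_{r_k}}(a_k)$ and the projection formula applied to each root-stack factor, yielding a matching sum of line bundles of the form $\cO_C(-D)$ for effective divisors $D$ supported on $\bslambda$. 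The main bookkeeping step is to exhibit the bijection between acyclic paths $\veca \to \vecb$ and these summands; since both sides are canonically determined by the same arrow data and the same root-stack identifications, the bijection is forced and $\Phi$ is an isomorphism on each component.
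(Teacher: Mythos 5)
Your proposal is correct and fills in the same picture the paper leaves implicit (the paper simply declares the lemma an immediate consequence of Definition~\ref{df:path algebra} and the isomorphisms~\eqref{eq:isomorphism}): define $\Phi$ arrow-by-arrow, account for the $\cO_C(-\lambda_i)$ twist on the closing arrows $a_{i,r_i}$ via $\cO_\bfX(\vecc)\simeq\varpi^*\cO_C(\lambda_i)$ and the projection formula, check the simple-cycle relations land on the canonical inclusions $\cO_C(-\lambda_i)\hookrightarrow\cO_C$, and match the acyclic-path decomposition of each hom-component of $\cO_C\cQ$ against $\varpi_*\cO_\bfX(\vecb-\veca)$. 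This is exactly the verification the paper has in mind, worked out carefully.
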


\begin{example}
To illustrate the above constructions,
we give an example of a family of algebras over
\(
   \bP ^{ 1 }
\)
parametrized by the points of
\(
   \bA ^{ 1 }
\).
Consider the labeled quiver $\cQ$ as in~\pref{eq:example of a labeled quiver},
where
$
D_1, D_2 \in \Div_\eff \left( \bP^1_{[u_0:u_1]} \times \bA^1_\lambda \right)
$
are defined by
$
s_1 \coloneqq u_0 - \lambda u_1,
s_2 \coloneqq u_0 - 2\lambda u_1
\in H^0 \left( \cO_{\bP^1} (1) \boxtimes \cO_{\bA^1} \right).
$
\begin{align}\label{eq:example of a labeled quiver}
  \begin{tikzpicture}
  [
    ->,
    thick,
    >=stealth,
    vertex/.style={circle,fill=white,draw},
    baseline=(current  bounding  box.west),
    node distance=2cm
  ]
    \node[vertex] (0) {0};
    \node[vertex] (1) [left=of 0] {1};
    \node[vertex] (2) [right=of 0] {2};
    \draw (0) to[bend left=30] node[pos=.5,fill=white] {$0$} (1);
    \draw (1) to[bend left=30] node[pos=.5,fill=white] {$D_1$} (0);
    \draw (0) to[bend left=30] node[pos=.5,fill=white] {$0$} (2);
    \draw (2) to[bend left=30] node[pos=.5,fill=white] {$D_2$} (0);
\end{tikzpicture}
\end{align}
The path algebra $\cO \cQ$
can be described as the matrix algebra 
\begin{align}
\begin{bmatrix}
\cO & \cO \left( - D _{ 1 } \right) & \cO \left( - D _{ 2 } \right)\\
\cO & \cO & \cO \left( - D _{ 2 } \right)\\
\cO & \cO \left( - D _{ 1 } \right) & \cO
\end{bmatrix},
\end{align}
which gives a flat family of finite $\bP^1_{[u_0:u_1]}$-algebras over $\bA^1_\lambda$.
If $\lambda \ne 0$,
then $D_1$ and $D_2$ give distinct points on $\bP^1$,
so that $\coh \cO \cQ$ is equivalent
to the category of coherent sheaves on a smooth rational orbifold projective curve
with two stacky points of order 2.
If $\lambda = 0$, then both $D_1$ and $D_2$ gives the point
$
p = [0:1] \in \bP^1_{[u_0:u_1]}.
$
The resulting path algebra
\begin{align}
\begin{bmatrix}
\cO & \cO(-p) & \cO(-p)\\
\cO & \cO & \cO(-p)\\
\cO & \cO(-p) & \cO
\end{bmatrix}
\end{align}
has a right module $\bfe_0 \cO_p$
of homological dimension two by~\pref{pr:hdOQ local}.
\end{example}

%
%------------------------------------------------------------------------
%
\section{Homological dimension of the path algebra}\label{sc:hdOQ}

\begin{definition}
  A quiver is said to \emph{have transverse cycles}
  if any pair of simple cycles intersect
  either at most one vertex
  or are cyclic permutations of each other.
\end{definition}

The following result is well-known. Lacking a good reference, we provide the short proof. 

\begin{proposition}\label{pr:OQbasis}
    If
    \(
        \cQ = (Q, D_{\bullet})
    \)
    is a $\Div_\eff C$-labeled quiver as in~\pref{df:path algebra} such that $Q$ has transverse cycles,
    then the morphism~\pref{eq:surjection to OQ}
    of $\cO_C$-modules is an isomorphism.
\end{proposition}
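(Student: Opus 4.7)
The plan is to apply Bergman's diamond lemma to the rewriting system on paths determined by $\cI$. Each simple cycle $\rho$ at a vertex $v$ supplies a rewrite rule $\alpha \rho \beta \Rightarrow s_\rho \alpha \beta$ valid whenever $\alpha\rho\beta$ is a composable path, where $s_\rho \in \Gamma(\cO_C(-D_\rho)) \subseteq \Gamma(\cO_C)$ is the canonical section cutting out $D_\rho$. Every rewrite strictly decreases path length, so the system is noetherian. By the diamond lemma, once local confluence is established, each path $\sigma$ admits a unique normal form $s_\sigma \cdot \widetilde{\sigma}$ with $\widetilde{\sigma} \in \cP^a$ and $s_\sigma \in \cO_C$, and the assignment $\sigma \mapsto s_\sigma \cdot \widetilde{\sigma}$ extends $\cO_C$-linearly to a section of~\pref{eq:surjection to OQ}, proving injectivity.

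To verify local confluence, suppose a path $\sigma$ admits two simple-cycle subwords $\rho_1, \rho_2$. If their arrow-position intervals in $\sigma$ are disjoint, the two rewrites manifestly commute. The case of genuine overlap is handled by the transverse-cycles hypothesis, which restricts $\rho_1, \rho_2$ as simple cycles of $Q$ to either share at most one vertex of $Q$ or be cyclic permutations of each other. In the first case, a positive-length overlap of the two subwords would force all $Q$-vertices visited along the overlap to coincide with the unique shared vertex, contradicting the simplicity of $\rho_1$; hence no genuine overlap can occur, and this case reduces to the disjoint one. In the second case $\rho_1$ and $\rho_2$ traverse the same cyclic sequence of arrows, so the additive labeling yields $D_{\rho_1} = D_{\rho_2}$ and the canonical sections satisfy $s_{\rho_1} = s_{\rho_2}$; an overlap then corresponds to $\sigma$ traversing the common cycle several times consecutively, and reducing either subword removes exactly one revolution and multiplies by this shared section, producing a common descendant.

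Termination together with local confluence then yields, via the diamond lemma, the required unique normal form, hence the injectivity of~\pref{eq:surjection to OQ}. The main delicate step is the overlapping-cycle confluence check; the crucial input is the equality $s_{\rho_1} = s_{\rho_2}$, which is ensured by the transversality hypothesis together with the fact that $D_\bullet$ is an additive labeling by effective divisors and cyclic-permutation cycles share the same multiset of arrows.
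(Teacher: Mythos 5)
Your overall strategy matches the paper's: apply Bergman's diamond lemma to the reduction system that collapses each simple cycle, and use the transverse-cycles hypothesis to resolve overlap ambiguities. Your confluence analysis (that two simple cycles sharing at most one vertex cannot share an arrow, hence produce no genuine overlap; and that cyclic permutations of a simple cycle carry the same total label $D_\rho$, so the two reductions agree) is the same observation the paper makes when it checks the overlap $a_i\cdots a_l a_1 \cdots a_l$.

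The gap is in the setup. You take $s_\rho$ to be ``the canonical section cutting out $D_\rho$'' and assert $s_\rho \in \Gamma(\cO_C(-D_\rho)) \subseteq \Gamma(\cO_C)$; but a section cutting out $D_\rho$ lives in $\Gamma(\cO_C(D_\rho))$, while $\Gamma(\cO_C(-D_\rho))$ has no distinguished nonzero element and can vanish entirely (e.g.\ $\cO_{\bP^1}(-1)$). So the global rewrite rule $\alpha\rho\beta \Rightarrow s_\rho\alpha\beta$ is not defined as written, and the ``section'' of \pref{eq:surjection to OQ} you construct from it does not exist globally. You need to either work affine-locally, over opens where each $D_\rho$ is principal and $s_\rho$ is a chosen local generator of the ideal sheaf $\cO_C(-D_\rho) \subseteq \cO_C$, or do as the paper does and localize at the generic point, where each $\cO_C(-D_\rho)$ becomes the function field $\cK$ and the relations reduce simply to $\rho - \bfe_v$; run the diamond lemma over $\cK$; and then conclude global injectivity of \pref{eq:surjection to OQ} from the fact that its source is a torsion-free $\cO_C$-module, so a kernel that vanishes at the generic point is zero. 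This last step — from the generic point back to all of $C$ — is absent from your proposal but is precisely what upgrades the combinatorial normal-form statement over $\cK$ to the $\cO_C$-module isomorphism the proposition asserts.
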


\begin{proof}
    First we study the map at the generic point of the curve
    \(
        C
    \).
    For this, let $\cK$ be the function field of $C$.
    The ideal $\cI_\cK \triangleleft \cK \cQ$ of relations at the generic point
    is generated by elements of the form
    $\rho - \bfe_v$ where $\rho$ ranges over all simple cycles and
    $v$ is the starting vertex of $\rho$.
    Given any path $\rho \in \cP ^{ c }$,
    we let $\bar{\rho}$ be a cycle-free path
    obtained from $\rho$ by contracting all simple cycles
    so that $\rho \equiv \bar{\rho}$ modulo $\cI_\cK$.
    Furthermore,
    our assumption that
    $Q$ has transverse cycles ensures that
    $\bar{\rho}$ is uniquely determined by $\rho$.
    In fact,
    to see the elements $\rho - \bar{\rho}$ form a $\cK$-basis for $\cI_\cK$,
    we use Bergman's diamond lemma  as follows (see \cite[Theorem~1.2]{Bergman} for its statement and associated terminology. Generalization to path algebras has been settled in, say,~\cite[Section~2.2]{Crawley-Boevey-ncag}).  We view $\cK \cQ$ as an algebra over the semi-simple algebra $\cK^{Q_0}$ freely generated by the bimodule $\cK Q_1$. We partially order monomials in edges by their length. The reduction system we use replaces each monomial of edges $a_1 a_2 \ldots a_l$ corresponding to a simple cycle $\rho$ beginning at $v$ with $\bfe_v$. Furthermore, given $a \in Q_1$, it replaces $\bfe_v a$ and $a \bfe_w$ with $a$ whenever $v$ is the target of $a$ and $w$ is its source, and to $0$ otherwise. We need to check the overlap $m = a_i \ldots a_l a_1 \ldots a_l$ which can be reduced using $\rho$ to give $a_i \ldots a_l \bfe_v = a_i \ldots a_l$. If $w$ is the source of $a_i$, then the monomial $m$ can also be reduced using the cycle $a_i \ldots a_l a_1 \ldots a_{i-1}$ to $\bfe_w a_i \ldots a_l = a_i \ldots a_l$. The two reductions are the same so the overlap ambiguity is resolved. Now the transverse cycle condition ensures that the only other overlaps to check involve idempotents $\bfe_v$, and these are easily resolved. 
    This proves~\pref{pr:OQbasis}
    at the generic point. As the source of the map~\pref{eq:surjection to OQ} is torsion free, this already implies the assertion.
\end{proof}

Given a simple cycle $\rho$ on a quiver $Q$,
we can construct a new quiver $Q'$
by \emph{contracting the cycle $\rho$}
as follows.
The set of vertices is defined by $Q'_0=Q_0/\!\sim$,
where $\sim$ is the equivalence relation
which identifies all vertices of $\rho$ but leaves all other vertices distinct.
The set of arrows $Q'_1$ is the subset of $Q_1$
consisting of arrows not contained in $\rho$.
A labeling
$
D_\bullet \colon Q_1 \to S
$
restricts to a labeling
$
D'_\bullet \coloneqq D_\bullet|_{Q_1'},
$
and we write $\cQ' = (Q',D'_\bullet)$
for $\cQ = (Q, D_\bullet)$.

\begin{proposition}\label{pr:Morita}
If $\cQ$ is a $\Div_\eff C$-labeled quiver
having transverse cycles
and
$\cQ'$ is the $\Div_\eff C$-labeled quiver
obtained by contracting a simple cycle $\rho$
with $D_\rho = 0$,
then one has an equivalence
\begin{align}
  \Qcoh \cO_C\cQ \simeq \Qcoh \cO_C\cQ'
\end{align}
of categories.
\end{proposition}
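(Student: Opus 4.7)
My strategy is to establish the equivalence at the level of representations of the labeled quiver, then invoke the identification between representations of $\cQ$ and modules over $\cO_C \cQ$.

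Writing $\rho = a_l a_{l-1} \cdots a_1$ with $a_i \colon v_{i-1} \to v_i$ (indices modulo $l$, so $v_l = v_0$) and setting $K = \{v_0, v_1, \ldots, v_{l-1}\}$, my first observation is that $D_\rho = \sum_{i=1}^l D_{a_i} = 0$ together with effectivity forces each $D_{a_i} = 0$.  Since every cyclic permutation of $\rho$ is itself a simple cycle (and the transverse cycles hypothesis ensures no confusion with other simple cycles), these permutations also carry the zero label, and the defining relations of $\cO_C \cQ$ accordingly collapse on the cycle to
\begin{align*}
a_i a_{i-1} \cdots a_1 a_l \cdots a_{i+1} = \bfe_{v_i} \qquad (i = 0, 1, \ldots, l-1).
\end{align*}

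Next I would unravel a quasi-coherent $\cO_C \cQ$-module as an assignment $v \mapsto M_v$ of an $\cO_C$-module to each vertex together with compatible action maps $\phi_a$ for each arrow, subject to one relation per simple cycle.  The displayed relations above show that each $\phi_{a_i}$ is an isomorphism, with two-sided inverse given by the complementary arc of $\rho$, so the restriction of $M$ to the cycle is completely determined by $M_{v_0}$ together with these transport isomorphisms.  This motivates defining $F \colon \Qcoh \cO_C \cQ \to \Qcoh \cO_C \cQ'$ by keeping $M_w$ unchanged for $w \notin K$, setting $(FM)_{[v_0]} := M_{v_0}$ at the collapsed vertex, and, for each arrow $b \in Q_1 \setminus \{a_1, \ldots, a_l\}$, pre- and post-composing $\phi_b$ with the relevant cycle-path isomorphisms whenever its source or target lies in $K$.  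A quasi-inverse $G$ goes in the opposite direction: take $(GN)_{v_i} := N_{[v_0]}$ for every $i$, declare $\phi_{a_i} = \id$ for arrows on $\rho$, and copy the action maps of $N$ over for the remaining arrows.

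The main obstacle will be to verify that every cycle relation of $\cO_C \cQ'$ and every cycle relation of $\cO_C \cQ$ not arising from $\rho$ and its cyclic permutations transfers correctly under $F$ and $G$.  Here the transverse cycles hypothesis is exactly what is needed: any other simple cycle of $Q$ meets $K$ in at most one vertex, hence descends bijectively to a simple cycle of $Q'$ carrying the same label, and the corresponding relation in $G(N)$ reduces cleanly to the same relation in $N$.  Once these compatibilities are checked, the natural comparison morphisms $\id \Rightarrow GF$ and $FG \Rightarrow \id$ are tautologically isomorphisms, establishing the desired equivalence.
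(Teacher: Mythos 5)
Your argument is correct, and it reaches the conclusion by a genuinely different route from the paper's. The paper runs a progenerator argument: since $D_\rho = 0$, multiplication by the arc of $\rho$ from $w$ to $v$ gives $P_w \simto P_v$ for any two vertices $v,w$ on $\rho$, so the direct sum $P$ of all $P_u$ for $u$ outside $\rho$ together with a single $P_{v_0}$ is a local progenerator; one then identifies $\cEnd_C(P)$ with $\cO_C \cQ'$ by matching the cycle-free path bases of Proposition \ref{pr:OQbasis}, using transversality to see that contraction gives a label-preserving bijection on cycle-free paths. You instead build the two functors $F,G$ directly at the level of representations. Both proofs rest on the same two observations, which you state correctly: effectivity forces each $D_{a_i}=0$ along $\rho$, making the arrows of $\rho$ invertible in the algebra, and the transverse-cycles hypothesis (via Lemma \ref{lm:transverse1pt}) guarantees that every simple cycle other than cyclic permutations of $\rho$ meets $K$ in at most one vertex and hence corresponds, label and all, to a simple cycle of $Q'$. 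Where the paper's route is more economical is that Morita theory handles the naturality and quasi-inverse checks automatically once the progenerator is exhibited; your route needs a hands-on verification that all cycle relations --- including those of $Q'$ arising at the collapsed vertex, which must be checked to lift back to simple cycles of $Q$ with matching endpoint (transversality again) --- transfer under $F$ and $G$, and that the unit and counit built from the cycle-transport isomorphisms are natural. You have identified the right obstruction and the right tool to dispatch it, so the sketch fills in correctly.
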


\begin{proof}
    Let $v,w \in Q_0$ and
    consider the local projectives $P_v = \bfe_v \cO_C\cQ$
    and $P_w = \bfe_w \cO_C\cQ$.
    If $v, w$ are in $\rho$, as
    \(
       D _{
        \rho
       }
       =
       0
    \),
    multiplication by the ``arc'' of $\rho$ from the vertex
    \(
       w
    \)
    to
    \(
       v
    \)
    induces an isomorphism
    $
        P_w \simto P_v.
    $
    Thus the direct sum
    $P = \oplus_v P_v$
    over all vertices of $Q$ outside $\rho$ and one vertex of $\rho$
    gives a local progenerator of $\cO_C \cQ$. 

    It thus suffices to show that
    $\cO_C \cQ' \simeq \cEnd_C P$.
    From~\pref{pr:OQbasis},
    we know that for each pair
    \(
       \left(
        v, w
       \right)
    \)
    of vertices of
    \(
       Q
    \)
    \begin{align}
        \cHom_{\cO_C \cQ}(P_w,P_v)
        = \bfe_v \cO_C\cQ \bfe_w
        = \bigoplus_{\gamma} \cO_C(-D_{\gamma}) \gamma,
    \end{align}
    where $\gamma$ runs over all cycle-free paths from $w$ to $v$.
    Since $Q$ has transverse cycles, the path of
    \(
       Q '
    \)
    which is obtained from an cycle-free path of
    \(
       Q
    \)
    in the obvious manner is again cycle-free.
    Moreover this yields a bijection between cycle-free paths from
    \(
       w
    \)
    to
    \(
       v
    \)
    and cycle-free paths in $Q'$
    from the image of $w$ to the image of $v$.
    The assumption $D_{\rho}=0$ ensures that this bijection respects the labeling,
    and~\pref{pr:Morita} is proved.
\end{proof}

\begin{definition}
A $\Div_\eff C$-labeling $D_\bullet$
of a quiver $\cQ$
is said to be \emph{reduced}
if
for every simple cycle $\rho$ on $Q$,
the divisor $D_{\rho}$ is reduced.
\end{definition}

\pref{lm:transverse1pt} below is an immediate consequence
of the definition of having transverse cycles.

\begin{lemma}\label{lm:transverse1pt}
If $Q$ has transverse cycles,
then any two simple cycles which intersect in more than one vertex are cyclic permutations of each other. 
\end{lemma}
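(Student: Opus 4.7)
The plan is to invoke the definition of having transverse cycles directly. That definition asserts a dichotomy for any pair of simple cycles $\rho_1, \rho_2$ in $Q$: either they share at most one vertex, or one is a cyclic permutation of the other. The hypothesis of the lemma supplies two simple cycles whose intersection contains strictly more than one vertex, which rules out the first alternative. Therefore the second alternative must hold, giving the conclusion.

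Concretely, I would phrase the argument as a single-sentence contrapositive: assuming $\rho_1, \rho_2$ are simple cycles sharing at least two vertices, the transverse-cycle hypothesis forbids the option of ``at most one shared vertex,'' leaving only ``cyclic permutations of each other.'' No combinatorial analysis of the arrows or paths of $Q$ is required beyond recalling what it means for two simple cycles to be cyclic permutations (they traverse the same ordered sequence of arrows up to choice of starting vertex).

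There is no main obstacle here; as the surrounding text advertises, the lemma is simply an immediate consequence of the definition, and the only thing to ``prove'' is the logical reformulation of the disjunction in \pref{df:path algebra}'s neighborhood. The remark that the lemma deserves is merely to flag that it will be used in the sequel to treat two simple cycles sharing more than one vertex as identical for the purposes of subsequent arguments about paths and contractions.
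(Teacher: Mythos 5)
Your proposal is correct and matches the paper's treatment exactly: the paper simply remarks that the lemma ``is an immediate consequence of the definition of having transverse cycles,'' which is precisely the disjunctive syllogism you spell out. One small slip: the relevant definition appears at the start of the section on homological dimension of the path algebra, not near the definition of the labeled-quiver path algebra as your parenthetical suggests, but this does not affect the substance of the argument.
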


The proof of \pref{lm:preservecyclecond} below is straightforward.

\begin{lemma}\label{lm:preservecyclecond}
Let $\rho$ be a simple cycle on $\cQ$ with $D_{\rho} = 0$
and
$\cQ'$ be the labeled quiver obtained by contracting $\rho$.
If $Q$ has transverse cycles,
then so does $Q'$.
If $D_\bullet$ is reduced,
then so is $D_\bullet'$.
\end{lemma}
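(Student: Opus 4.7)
The plan is to analyze how simple cycles in $Q'$ correspond to simple cycles in $Q$ via a natural lifting operation, then transport both the transverse-cycle and reducedness conditions from $Q$ up to $Q'$. Throughout, let $v_*$ denote the image in $Q_0'$ of the vertices of $\rho$.

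First I would define a lifting. Given a simple cycle $\sigma' = a_1 a_2 \cdots a_k$ in $Q'$, its arrows lie in $Q_1 \setminus \rho$; I extend the sequence to a closed walk $\tilde{\sigma}$ in $Q$ by inserting, between each consecutive pair $(a_i, a_{i+1})$ (indices mod $k$) whose endpoints disagree in $Q$ (and hence both lie on $\rho$), the unique directed arc of $\rho$ from $t(a_i)$ to $s(a_{i+1})$. Since $\sigma'$ is simple, $v_*$ occurs in it at most once, so at most one arc of $\rho$ is inserted; a direct inspection of the vertex sequence, using that $\rho$ is simple and intermediate vertices of $\sigma'$ in $Q$ lie off $\rho$, shows that $\tilde{\sigma}$ is a simple cycle in $Q$. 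Because $\tilde{\sigma}$ contains an arrow not on $\rho$, it is not a cyclic permutation of $\rho$, so the transverse cycles property of $Q$ forces $\tilde{\sigma}$ to meet $\rho$ in at most one vertex. Consequently no nontrivial arc of $\rho$ is ever actually inserted, and $\tilde{\sigma}$ is the honest concatenation of the $a_i$'s in $Q$, meeting $\rho$ in at most one vertex.

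Next, for the transverse cycle property on $Q'$, I would take two simple cycles $\sigma_1', \sigma_2'$ in $Q'$ sharing at least two vertices, with lifts $\tilde{\sigma}_1, \tilde{\sigma}_2$ in $Q$. When $v_*$ is not a shared vertex, or when $\sigma_1'$ and $\sigma_2'$ share at least two vertices besides $v_*$, the lifts share at least two vertices in $Q$, so transverse cycles of $Q$ makes them cyclic permutations, and this descends to $Q'$. The main obstacle is the remaining case: both $\sigma_i'$ pass through $v_*$, their lifts meet $\rho$ at distinct vertices $u_1 \ne u_2$, and they share exactly one vertex $w \ne v_*$ in $Q'$. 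I would rule this out by contradiction: assemble a new cycle $\tau$ in $Q$ by concatenating the arc of $\rho$ from $u_1$ to $u_2$, an arc of $\tilde{\sigma}_2$ from $u_2$ to $w$, and an arc of $\tilde{\sigma}_1$ from $w$ to $u_1$. The conditions $\tilde{\sigma}_i \cap \rho = \{u_i\}$ and $\tilde{\sigma}_1 \cap \tilde{\sigma}_2 = \{w\}$ (the latter by transverse cycles of $Q$ applied to the lifts) ensure that $\tau$ is a simple cycle sharing at least $\{u_1, u_2\}$ with $\rho$ without being a cyclic permutation of $\rho$, contradicting transverse cycles of $Q$.

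Finally, reducedness is a short consequence of the same lifting. The hypothesis $D_\rho = 0$ combined with the effectiveness of the labels forces every arrow of $\rho$ to carry the zero divisor. So even if the lifting formally inserted arcs of $\rho$, this would not affect the total label, giving $D_{\tilde{\sigma}} = D'_{\sigma'}$, and the reducedness of $D_{\tilde{\sigma}}$ transfers directly to $D'_{\sigma'}$. The main difficulty in the whole argument is isolating and dispatching the exceptional configuration in the previous paragraph; everything else is a routine case analysis.
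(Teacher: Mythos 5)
Your proof is correct. The paper declares this lemma ``also clear'' and offers no argument, so there is no in-paper proof to compare it against; but your approach follows the natural line of reasoning and holds up under scrutiny. In particular: the lift $\tilde\sigma$ of a simple cycle $\sigma'$ of $Q'$ is indeed a simple cycle of $Q$ (simplicity of $\rho$ and $\sigma'$, plus the fact that $\sigma'$ passes through $v_*$ at most once, give disjointness of the inserted $\rho$-arc from the rest); the presence of an arrow off $\rho$ in $\tilde\sigma$ plus transversality of $Q$ shows $\tilde\sigma$ meets $\rho$ in at most one vertex and hence no $\rho$-arc is ever actually inserted; the residual configuration you isolate --- $\sigma'_1,\sigma'_2$ sharing exactly $\{v_*,w\}$ with $w\neq v_*$ and lifting to cycles hitting $\rho$ at distinct vertices $u_1\neq u_2$ --- is the only place real work is needed, and your auxiliary cycle $\tau$ (the $\rho$-arc from $u_1$ to $u_2$, then an arc of $\tilde\sigma_2$ from $u_2$ to $w$, then an arc of $\tilde\sigma_1$ from $w$ to $u_1$) is simple because $\tilde\sigma_i\cap\rho=\{u_i\}$ and $\tilde\sigma_1\cap\tilde\sigma_2=\{w\}$ keep its three pieces disjoint away from their shared endpoints, yet $\tau$ meets $\rho$ in two vertices without being a cyclic permutation of $\rho$, contradicting transversality. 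For reducedness you rightly observe that effectivity together with $D_\rho=0$ forces $D_a=0$ for every arrow $a$ of $\rho$, so $D_{\tilde\sigma}=D'_{\sigma'}$ irrespective of whether a $\rho$-arc is inserted, and no transversality hypothesis is used --- matching the fact that the lemma's second conclusion does not assume one.
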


The condition of transverse cycles has the following nice consequence.

\begin{lemma}\label{lm:edgenzd}
    For a $\Div_\eff C$-labeled quiver $\cQ$
    with transverse cycles
    and an arrow $a \in Q_1$,
    the left multiplication by $a$ map
    $
        \bfe_{s(a)} \cK \cQ
        \xrightarrow{
            a \cdot
        }
        \bfe_{t(a)} \cK \cQ
    $
    over the function field $\cK$ of $C$ is injective.
\end{lemma}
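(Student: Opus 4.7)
The strategy is to apply Proposition~\ref{pr:OQbasis} to replace the question by an injectivity statement about a map between basis sets of cycle-free paths, then to invoke the transverse cycles hypothesis in exactly one delicate case. By Proposition~\ref{pr:OQbasis}, the $\cK$-vector spaces $\bfe_{s(a)} \cK\cQ$ and $\bfe_{t(a)} \cK\cQ$ have bases indexed by the cycle-free paths ending at $s(a)$ and $t(a)$ respectively. For a basis element $\gamma = e_k \cdots e_1$ with vertices $v_0, v_1, \ldots, v_k = s(a)$, I would compute the reduction $\overline{a\gamma}$ of $a\gamma$ modulo the defining relations, noting that over $\cK$ each simple cycle $\rho$ at $v$ is identified with $\bfe_v$. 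If $t(a)$ is not a vertex of $\gamma$, then $a\gamma$ is already cycle-free and $\overline{a\gamma}=a\gamma$; otherwise $t(a)=v_i$ for some $0 \le i \le k$, the tail $a \cdot e_k\cdots e_{i+1}$ is a simple cycle at $v_i$ which reduces to $\bfe_{v_i}$, and $\overline{a\gamma}=e_i\cdots e_1$ (which equals $\gamma$ when $a$ is a loop). Since each reduction replaces a monomial by another with coefficient $1$, the action of $a \cdot$ on basis elements is captured by the assignment $\gamma \mapsto \overline{a\gamma}$, and $\cK$-linear injectivity of $a \cdot$ reduces to the set-theoretic injectivity of this assignment.

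To verify that injectivity, I would proceed by case analysis on whether $t(a)$ lies on $\gamma$ and on $\gamma'$. The case where neither contains $t(a)$ is immediate from cancelling the leading $a$. The mixed case, where exactly one of $\gamma, \gamma'$ contains $t(a)$, is ruled out by comparing lengths of $a\gamma$ and of an initial segment $e_i\cdots e_1$, together with the observation that the last arrow of such an initial segment cannot be $a$---otherwise $s(a)$ would occur both as $v_{i-1}$ and as $v_k$, violating cycle-freeness of the ambient path. In each of these cases the conclusion $\gamma = \gamma'$ comes out for free without yet using the transverse cycles hypothesis.

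The main obstacle is the remaining case, where both $\gamma$ and $\gamma'$ pass through $t(a)$ at an interior vertex and give the same initial segment $\gamma_1$ up to $t(a)$. Writing $\gamma = \gamma_2 \gamma_1$ and $\gamma' = \gamma'_2 \gamma_1$, the tails $\gamma_2, \gamma'_2$ are cycle-free paths from $t(a)$ to $s(a)$ which are internally disjoint from $\gamma_1$, so $a\gamma_2$ and $a\gamma'_2$ are simple cycles based at $t(a)$ and both passing through $s(a)$. This is precisely where the transverse cycles hypothesis enters: sharing the two vertices $t(a)$ and $s(a)$, the two simple cycles must be cyclic permutations of each other, and since each contains the arrow $a$ exactly once, aligning them at $a$ forces $\gamma_2 = \gamma'_2$ and hence $\gamma = \gamma'$. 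Combining all cases, the map $\gamma \mapsto \overline{a\gamma}$ is injective on bases, and the lemma follows.
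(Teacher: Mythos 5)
Your proof is correct and takes essentially the same route as the paper: both reduce to the cycle-free basis from Proposition~\ref{pr:OQbasis}, both observe that a reduction $\overline{a\gamma}$ is either of the form $a\gamma$ (last arrow $a$) or an initial segment not involving $a$, and both invoke the transverse-cycles hypothesis to pin down the unique simple cycle through $a$ based at $t(a)$. Your case analysis is a mild unpacking of the paper's one-line observation that the acyclic contributions $a\rho_i$ and the contracted paths $\rho'_i$ are pairwise distinct basis elements.
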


\begin{proof}
    Suppose to the contrary that
    $
        a \sum_i r_i \rho_i = 0,
    $
    where
    $
        r_i \in \cK^\times
    $
    and
    $\rho_i$ are distinct cycle-free paths
    ending at $s(a)$.
    If
    \(
        a \rho _{ i }
    \)
    are cycle-free for all
    \(
        i
    \),
    then the assertion is obvious.
    For the contrary, suppose that
    \(
        a \rho _{ i }
    \)
    contains a cycle for some
    \(
        i
    \).
    By the transversality assumption there exists a unique simple cycle
    \(
        \gamma
        =
        a \gamma '
    \)
    at the vertex
    \(
        t ( a )
    \).
    Let
    \(
        \rho ' _{ i }
    \)
    be the unique acyclic path such that
    \(
        \rho _{ i }
        =
        \gamma '
        \rho ' _{ i }
    \).
    Now
    \begin{align}\label{equation:a sum}
        0
        =
        a    
        \sum
        _{
            i
        }
        r _{ i }
        \rho _{ i }
        =
        \sum
        _{
            a \rho _{ i }
            \in
            \cP ^{ c }
        }
        r _{ i }
        a \rho _{ i }
        +
        \sum
        _{
            a \rho _{ i }
            \in
            \cP ^{ a }
        }
        r _{ i }
        a \rho _{ i }
        =
        \sum
        _{
            a \rho _{ i }
            \in
            \cP ^{ c }
        }
        r _{ i }
        \bfe
        _{
            t _{ a }
        }
        \rho ' _{ i }
        +
        \sum
        _{
            a \rho _{ i }
            \in
            \cP ^{ a }
        }
        r _{ i }
        a \rho _{ i }.
    \end{align}
    As
    \(
       \rho ' _{ i }
    \)
    for those
    \(
       i
    \)
    such that
    \(
       a \rho _{ i }
       \in
       \cP ^{ a }
    \)
    are all distinct and do not contain the edge
    \(
       a
    \),
    the equality~\eqref{equation:a sum} implies
    \(
       r _{ i } = 0
    \)
    for all
    \(
       i
    \).
\end{proof}

\pref{th:hdOQ} below is a consequence of~\pref{pr:hdOQ local}
and~\pref{pr:hdOQ local to global}

\begin{theorem}\label{th:hdOQ}
The path algebra
$\cO_C \cQ$
of
a reduced $\Div_\eff C$-labeled quiver
$\cQ$
on a smooth curve $C$
with transverse cycles
has homological dimension at most two.
\end{theorem}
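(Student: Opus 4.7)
The plan is to decompose the proof into a descent-to-stalks step (\pref{pr:hdOQ local to global}) and a local calculation at a closed point of $C$ (\pref{pr:hdOQ local}). For the descent step: by \pref{pr:OQbasis}, the sheaf $\cO_C\cQ$ is a coherent $\cO_C$-algebra, and since $C$ is a smooth (hence Noetherian) curve, $\operatorname{Ext}$ over $\cO_C\cQ$ commutes with localisation at closed points for finitely presented modules. The homological dimension of $\Qcoh \cO_C\cQ$ therefore equals the supremum, over closed points $p \in C$, of the global dimension of the stalk $(\cO_C\cQ)_p$, and it suffices to bound the latter by $2$.

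Fix such a closed point $p$ with uniformiser $t \in \cO_{C,p}$ and write $B \coloneqq (\cO_C\cQ)_p$. The reduced hypothesis forces the local multiplicity $\nu_p(D_\rho) \in \{0,1\}$ for every simple cycle $\rho$, and the defining relation of $\cO_C\cQ$ localises to $\rho = t^{\nu_p(D_\rho)}\bfe_{s(\rho)}$ in $B$. I would then apply a local version of \pref{pr:Morita}, using \pref{lm:preservecyclecond} to preserve transversality and reducedness, to contract every simple cycle with $\nu_p(D_\rho) = 0$ (whose local relation is $\rho = u\,\bfe_{s(\rho)}$ for a local unit $u$). This reduces, without loss of generality, to the case where every simple cycle of the resulting labelled quiver satisfies $\rho = t\,\bfe_{s(\rho)}$ in $B$.

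For this reduced local picture I would construct, for each vertex $v$, a projective resolution of the simple $B$-module $S_v$ of length at most $2$:
\begin{align}
    0 \to K_v \to \bigoplus_{a \in Q_1,\, t(a)=v} P_{s(a)} \xrightarrow{\cdot a} P_v \to S_v \to 0,
\end{align}
where $P_w = \bfe_w B$. Using \pref{pr:OQbasis} to express elements of $B$ in the basis of acyclic paths and \pref{lm:edgenzd} to cancel the first arrow of each simple cycle, the kernel $K_v$ is generated by exactly one syzygy for each simple cycle $\rho$ ending at $v$, obtained from the relation $\rho = t\,\bfe_v$. The transverse-cycle hypothesis, via \pref{lm:transverse1pt}, ensures that distinct simple cycles through $v$ share no further arrows, so these syzygies span a free $B$-submodule and $K_v$ is itself projective, giving $\operatorname{pd}_B S_v \le 2$ for every simple $S_v$ and hence $\operatorname{gldim} B \le 2$.

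The main technical obstacle is the explicit identification of $K_v$ as a projective module: one must verify that the syzygies arising from the various simple cycles at $v$ are $B$-linearly independent and that no further cycle-free paths produce unexpected relations. The transversality hypothesis supplies exactly the combinatorial input needed, while the reduced labelling is essential to prevent the syzygies from themselves being killed by further cycles, which would otherwise force a resolution of length greater than $2$.
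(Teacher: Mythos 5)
Your high-level decomposition into a local-to-global step and a local DVR calculation matches the paper exactly, as do the ingredients you invoke (\pref{pr:OQbasis}, \pref{pr:Morita}, \pref{lm:preservecyclecond}, \pref{lm:transverse1pt}, \pref{lm:edgenzd}). However, both steps have gaps as you've sketched them.

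The more serious gap is in the local-to-global step. Your assertion that ``$\Ext$ over $\cO_C\cQ$ commutes with localisation at closed points'' and that therefore the homological dimension of $\Qcoh\cO_C\cQ$ equals $\sup_p \operatorname{gldim}(\cO_C\cQ)_p$ is not justified. It is the sheaf $\cExt$'s that localise; the global $\Ext$ groups in $\coh\cO_C\cQ$ are related to them by the local-to-global spectral sequence $H^p\bigl(C, \cExt^q_{\cA}(\scrF,\scrG)\bigr) \Rightarrow \Ext^{p+q}_{\cA}(\scrF,\scrG)$, and on a one-dimensional base the naive bound from local global dimension $\le 2$ is $\operatorname{hd}\le 1+2=3$, not $2$. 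The paper's \pref{pr:hdOQ local to global} closes exactly this gap by splitting $\scrF$ into its torsion and torsion-free parts: torsion modules contribute no $H^1$, while torsion-free modules are shown (via a multiplication-by-$t$ argument and Nakayama, using the already-established vanishing of $\Ext^3$ for the torsion quotient $\scrF/t\scrF$) to have \emph{local projective dimension at most one}, not two. Without this refinement your descent step does not yield the bound $2$.

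The local step also needs more care. Your resolution $0 \to K_v \to \bigoplus_{t(a)=v} P_{s(a)} \to P_v \to S_v \to 0$ is not a resolution when no cycle passes through $v$: the middle map, which only multiplies by arrows, does not hit $t\bfe_v \in \rad P_v$, so an extra summand $P_v$ acting by $t$ is needed in the first syzygy term (the paper has separate resolutions for the acyclic-vertex and cyclic-vertex cases). In the cyclic case, your description of $K_v$ as ``one syzygy per simple cycle at $v$'' also does not match the paper's explicit second syzygy module $\bigoplus_{i=1}^{r}P_v \oplus \bigoplus_{a\in\Xi_v} P_{s(a)}$: with $r+1$ simple cycles one gets only $r$ independent cycle relations $\rho_i - \rho_0$, and one additionally needs the ``torsion'' syzygies $\rho'_0\,a - t$ for each arrow $a\in\Xi_v$ ending at $v$ that does not start a cycle. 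The projectivity and exactness are then exhibited concretely rather than asserted. So while your strategic outline is the right one, you would need to repair both the spectral-sequence descent and the shape of the local resolution to make the proof go through.
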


\begin{proposition}\label{pr:hdOQ local}
    \pref{th:hdOQ} holds if $C$ is the spectrum of a discrete valuation ring.
\end{proposition}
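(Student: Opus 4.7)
The plan is to construct, for each simple right $A$-module $S_v$ (where $A \coloneqq \cO_C \cQ$ and $R = \cO_C$ is a DVR with uniformizer $\pi$ and closed point $p$), an explicit projective resolution of length at most two. First, applying \pref{pr:Morita} iteratively, I would contract every simple cycle $\rho$ with $D_\rho = 0$; \pref{lm:preservecyclecond} ensures the transverse-cycles condition and reducedness survive, and afterwards every simple cycle satisfies $D_\rho = p$, i.e.\ contains exactly one arrow labeled $p$. By \pref{pr:OQbasis}, $A = \bigoplus_{\gamma \in \cP^a} \pi^{n_\gamma} R\cdot\gamma$ with $n_\gamma$ the $p$-multiplicity of $D_\gamma$, so $A$ is $R$-flat and an $R$-order; hence its global dimension equals the supremum of projective dimensions of the residue simple modules $S_v$ indexed by $v \in Q_0$.

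Fix $v$, and let $\cC_v$ be the set of distinct simple cycles through $v$, each written as $\rho = b\rho'$ with $b$ its last arrow (so $t(b) = v$). Start with the projective cover $P_v \coloneqq \bfe_v A \twoheadrightarrow S_v$ with kernel $J_v$. Then form
\[
    P^{(1)} \coloneqq \varepsilon P_v \oplus \bigoplus_{b \in Q_1,\, t(b)=v} P_{s(b)}, \qquad (y_0, (y_b)) \longmapsto \pi y_0 + \sum_b (\pi^{n_b} b)\cdot y_b,
\]
with $\varepsilon = 1$ when $\cC_v = \emptyset$ and $\varepsilon = 0$ otherwise. Surjectivity onto $J_v$ follows from three facts: the factorization $\pi^{n_\gamma}\gamma = (\pi^{n_b}b)\cdot\pi^{n_{\gamma'}}\gamma'$ for every nontrivial acyclic $\gamma = b\gamma'$ ending at $v$; the $\pi y_0$-term covering $\pi R\bfe_v$ in the cycle-free case; and the cycle identity $\pi\bfe_v = (\pi^{n_b}b)\cdot\pi^{n_{\rho'}}\rho'$ for any $\rho = b\rho' \in \cC_v$ in the cyclic case.

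Next I would identify the kernel with a direct sum of indecomposable projectives. In the cycle-free case, the kernel is generated by the \emph{primary telescopings} $K_b \coloneqq (\pi^{n_b}b,\, -\pi\bfe_{s(b)}) \in P_v \oplus P_{s(b)}$, one per arrow $b$ into $v$: every other relation $K_\gamma$ at $\gamma = b\gamma'$ is obtained as $K_b \cdot \pi^{n_{\gamma'}}\gamma'$, and since $-\pi\bfe_{s(b)}$ has trivial annihilator, the kernel becomes $\bigoplus_b P_{s(b)}$. In the cyclic case, \pref{lm:transverse1pt} implies that distinct simple cycles at $v$ have distinct last-incoming arrows, so the \emph{cycle-difference} element with $b_i$-component $\pi^{n_{\rho'_i}}\rho'_i$ and $b_j$-component $-\pi^{n_{\rho'_j}}\rho'_j$ (zero elsewhere), for a fixed $\rho_i$ and varying $\rho_j \in \cC_v \setminus \{\rho_i\}$, yields $|\cC_v|-1$ generators; each generates a free cyclic $A$-submodule $\simeq P_v$ by \pref{lm:edgenzd} applied iteratively along each $\rho'$, and transversality ensures independence, giving $\operatorname{Ker}\simeq P_v^{|\cC_v|-1}$. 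Either way the kernel is projective, so $\operatorname{pd}(S_v) \leq 2$ and the theorem follows.

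The main technical obstacle is the kernel identification: it requires a coefficient-by-coefficient analysis of the map $\phi \colon P^{(1)} \to J_v$, matching the $\pi^{n_\delta}\delta$-components for each acyclic path $\delta$ ending at $v$. The transverse-cycles hypothesis is essential throughout, because without it the product $(\pi^{n_b}b)\cdot\pi^{n_{\gamma'}}\gamma'$ could reduce via simple cycles not based at $v$, producing extra relations and breaking the clean direct-sum description of the kernel.
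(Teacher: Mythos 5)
Your approach coincides with the paper's: after Morita-contracting (via \pref{pr:Morita} and \pref{lm:preservecyclecond}) all simple cycles with trivial label, you build an explicit length-two projective resolution of each $S_v$ using \pref{pr:OQbasis} and \pref{lm:edgenzd}, treating the cycle-free and cyclic cases at $v$ separately. In the cycle-free case your resolution
$0 \to \bigoplus_b P_{s(b)} \to P_v \oplus \bigoplus_b P_{s(b)} \to P_v \to S_v \to 0$
is exactly the paper's, just with a permutation of the middle summands and the identification $t=\pi$.

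There is, however, a genuine gap in your cyclic case. You claim $\ker\phi \simeq P_v^{|\cC_v|-1}$, built solely from the cycle-difference relations between the simple cycles at $v$. But your $P^{(1)} = \bigoplus_{b\colon t(b)=v} P_{s(b)}$ ranges over \emph{all} arrows into $v$, including the arrows in $\Xi_v := \{\, a \in Q_1 : t(a)=v \,\} \setminus \{a_0,\dots,a_r\}$ that are not the last arrow of any simple cycle at $v$. These contribute additional kernel elements that your description omits: for $a \in \Xi_v$ with $s(a)=w$, the element with $a_0$-component $\pi^{n_{\rho'_0}}\rho'_0 \cdot \pi^{n_a}a$ and $a$-component $-\pi\bfe_w$ (zero elsewhere) lies in $\ker\phi$, since $\pi^{n_{a_0}}a_0 \cdot \pi^{n_{\rho'_0}}\rho'_0 \cdot \pi^{n_a}a - \pi^{n_a}a\cdot\pi\bfe_w = \pi\cdot\pi^{n_a}a - \pi^{n_a+1}a = 0$, and it is not in the span of your cycle-differences. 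Consequently $\ker\phi$ is $P_v^{r} \oplus \bigoplus_{a\in\Xi_v} P_{s(a)}$, as in the paper's map $\phi$ (note the block of entries $\rho'_0 a$ and $-t$ indexed by $\Xi_v$). The conclusion $\pd S_v \le 2$ survives because the missing summand is still projective, but your stated identification of the kernel, and hence the independence/exactness verification you sketch, is incomplete whenever $\Xi_v \ne \emptyset$ — which can certainly happen for a general reduced labeled quiver with transverse cycles, even if it does not for the specific quiver of \pref{sc:A_Y quiver}.
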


\begin{proof}
    Let $\cO$ be a discrete valuation ring
    with the closed point $p$
    and
    a uniformizing parameter $t$.
    Each vertex $v \in Q_0$ gives rise
    to the indecomposable projective module
    $P_v = \bfe_v \cO\cQ$
    and the simple module
    $S_v = P_v/ \rad P_v$.
    It suffices to show that $\pd S_v \le 2$. 

    Suppose first that there are no cycles through $v$.
    Then~\pref{pr:OQbasis} and~\pref{lm:edgenzd} imply that
    \begin{align}
        0
        \to \bigoplus_{t(a) = v} P_{s(a)}
        \xto{\binom{-t}{a}}
        \bigoplus_{t(a)=v} P_{s(a)} \oplus P_v
        \xto{(a \ t)} P_v
        \to S_v
        \to 0
    \end{align}
    gives a projective resolution of
    $
        S_v = \bfe_v \cO/(t),
    $
    so that
    $
        \pd S_v \le 2
    $
    in this case.

    Suppose now that there exists a simple cycle $\rho$ starting at $v$.
    If $D_{\rho} = 0$,
    then we may contract $\rho$ to obtain a new $\Div_\eff C$-labeled quiver,
    whose path algebra is Morita equivalent to the original one by~\pref{pr:Morita},
    and which
    still satisfies the hypotheses of~\pref{th:hdOQ}
    by~\pref{lm:preservecyclecond}.
    By repeating this operation if necessary,
    we may assume that there exist exactly $r+1$ simple cycles
    $\rho_0, \ldots, \rho_r$
    starting at $v$
    and that $D_{\rho_i} = p$ for all $i$.
    \pref{lm:transverse1pt} implies that
    $\rho_i$ and $\rho_j$ for $i \ne j$ intersect only at the vertex $v$.
    Set
    \begin{align}
    \rho_i = a_i \rho'_i
    \end{align}
    and
    \begin{align}
        \Xi_v
        \coloneqq
        \left\{
            a \in Q_1 \relmid t(a) = v
        \right\}
        \setminus
        \left\{
            a_0, \ldots, a_r
        \right\}.
    \end{align}
    Since $D_{\rho_i} = p \ne 0$ for all $i$,
    we again have 
    $
        S_v
        =
        \bfe_v \cO/(t).
    $
    We claim that we have a projective resolution
    \begin{align}\label{eq:Sv}
    0
    \to
    \bigoplus
    _{
        i = 1
    }
    ^{
        r
    }
    P _{ v }
    \oplus
        \bigoplus
            _{
                a
                \in
                \Xi
                _{
                    v
                }
            }
            P _{
                s ( a )
            }
    \xto{\ \phi \ }
    \bigoplus_{t(a)=v} P_{s(a)}
    \xto{(a)} P_v
    \to S_v
    \to 0,
    \end{align}
    where $\phi$ is defined by the following block matrix (we think of the elements of the source and the target as column vectors):
    \begin{equation}
        \left[
            \begin{array}{c:c}
                \begin{matrix}
                    - \rho ' _{ 0 }
                    &
                    \cdots
                    &
                    - \rho ' _{ 0 }
                \end{matrix}
                &
                \begin{pmatrix}
                    \rho ' _{ 0 }
                    a
                \end{pmatrix}
                _{
                    a
                    \in
                    \Xi _{ v }
                }
                \\
                \hdashline
                \begin{matrix}
                    \rho ' _{ 1 }
                    &
                    &
                    \\
                    &
                    \ldots
                    &
                    \\
                    &
                    &
                    \rho ' _{ r }
                \end{matrix}
                &
                0
                \\
                \hdashline
                0
                &
                \left(
                    - t
                \right)
                _{
                    a
                    \in
                    \Xi _{ v }
                }
            \end{array}
        \right]
        \colon
        \begin{bmatrix}
            \bigoplus
            _{
                i = 1
            }
            ^{
                r
            }
            P _{ v }
            \\
            \\
            \bigoplus
            _{
                a
                \in
                \Xi
                _{
                    v
                }
            }
            P _{
                s ( a )
            }
        \end{bmatrix}
        \to
        \begin{bmatrix}
            P
            _{
                s
                \left(
                    a _{ 0 }
                \right)
            }
            \\
            \\
            \bigoplus
            _{
                i = 1
            }
            ^{
                r
            }
            P _{
                s
                \left(
                    a _{ i }
                \right)
            }
            \\
            \\
            \bigoplus
            _{
                a
                \in
                \Xi
                _{
                    v
                }
            }
            P _{
                s ( a )
            }
        \end{bmatrix}
    \end{equation}
    
    % The map
    % $
    % P_v^{\oplus r} \to P_{s(a_0)}
    % $
    % is given by the matrix
    % $
    % (-\rho'_0 \ldots -\rho'_0),
    % $
    % while the map
    % $
    % P_v^{\oplus r} \to \bigoplus_{i=1}^r P_{s(a_i)}
    % $
    % is given by the diagonal matrix with entries $\rho'_i$.
    % The map
    % $
    %             \bigoplus
    %         _{
    %             a
    %             \in
    %             \Xi
    %             _{
    %                 v
    %             }
    %         }
    %         P _{
    %             s ( a )
    %         }
    % \to \bigoplus_{i=1}^r P_{s(a_i)}
    % $
    % is zero while the map
    % $
    %             \bigoplus
    %         _{
    %             a
    %             \in
    %             \Xi
    %             _{
    %                 v
    %             }
    %         }
    %         P _{
    %             s ( a )
    %         }
    % \to P_{s(a_0)} \oplus             \bigoplus
    %         _{
    %             a
    %             \in
    %             \Xi
    %             _{
    %                 v
    %             }
    %         }
    %         P _{
    %             s ( a )
    %         }
    % $
    % is given by the matrix
    % $
    % \binom{\rho'_0 a}{-t}.
    % $

    It is easy to check that~\pref{eq:Sv} is a complex and
    \(
    \phi
    \)
    is injective. What remains to check is the inclusion
    $
        \ker (a) \subseteq \image \phi.
    $
    Take an element of
    \(
       \ker ( a )
    \); namely, a column vector
    \begin{align}\label{equation:element of ker ( a )}
        \begin{bmatrix}
            \sigma _{ 0 }
            \\
            \sigma _{ 1 }
            \\
            \vdots
            \\
            \sigma _{ r }
            \\
            \left(
                \sigma _{ a }
            \right)
            _{
                a
                \in
                \Xi _{ v }
            }
        \end{bmatrix}
        \in
        \ker ( a )
    \end{align}
    such that
    \begin{align}\label{equation:ker ( a )}
        0
        =
        ( a )
        \begin{bmatrix}
            \sigma _{ 0 }
            \\
            \sigma _{ 1 }
            \\
            \vdots
            \\
            \sigma _{ r }
            \\
            \left(
                \sigma _{ a }
            \right)
            _{
                a
                \in
                \Xi _{ v }
            }
        \end{bmatrix}
        =
        a _{ 0 } \sigma _{ 0 }
        +
        \sum
        _{
            i = 1
        }
        ^{
            r
        }
        a _{ i } \sigma _{ i }
        +
        \sum
        _{
            a
            \in
            \Xi _{ v }
        }
        a \sigma _{ a }.
    \end{align}
    Without loss of generality, we assume all entries of~\eqref{equation:element of ker ( a )} are linear combinations of cycle-free paths.
    By a direct analysis one can confirm that it can be modified by an element of
    $
        \image \phi
    $
    so that all entries of~\eqref{equation:element of ker ( a )} but
    \(
       \sigma _{ 0 }
    \)
    are zero.    
    Exactness now follows from~\pref{lm:edgenzd}, which ensures that
    $
        \ker(a_0 \colon P_{s(a_0)} \to P_v) = 0,
    $
    and~\pref{pr:hdOQ local} is proved.
\end{proof}

% \begin{proposition}[{\cite[Proposition III.6.7.a]{MR0249491}}]
% The global dimension
% of a finite algebra
% over a commutative noetherian ring
% is the supremum of the homological dimensions
% of simple modules.
% \end{proposition}

\begin{proposition}\label{pr:hdOQ local to global}
Let $\cA$ be a coherent sheaf of algebras
on a smooth curve $C$ such that at any point
\(
   p \in C
\)
the homological dimension of the stalk
\(
   \cA _{ p }
\)
is at most two.
Then the homological dimension of $\cA$ is at most two.
\end{proposition}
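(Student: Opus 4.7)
The strategy is to apply the local-to-global spectral sequence for $\Ext$ in the category of quasi-coherent $\cA$-modules,
\begin{align*}
E_2^{p,q} = H^p\!\left(C, \mathcal{E}xt^q_\cA(M,N)\right) \Longrightarrow \Ext^{p+q}_\cA(M,N).
\end{align*}
Since $C$ is Noetherian of Krull dimension one, $H^p(C,-) = 0$ for $p \ge 2$ on quasi-coherent sheaves. For $M$ coherent, the stalk of $\mathcal{E}xt^q_\cA(M,N)$ at $p$ equals $\Ext^q_{\cA_p}(M_p, N_p)$, which vanishes for $q \ge 3$ by the stalk hypothesis, so $\mathcal{E}xt^q_\cA(M,N) = 0$ for $q \ge 3$. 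The $E_2$-page is then concentrated in the box $\{0,1\} \times \{0,1,2\}$, and all higher differentials $d_r$ with $r \ge 2$ shift the $p$-index out of this box, so $E_\infty = E_2$. This yields $\Ext^i_\cA(M,N) = 0$ for $i \ge 4$ immediately; the passage from coherent to arbitrary quasi-coherent $M$ is handled by writing $M$ as a filtered colimit of coherent subobjects and applying $\Ext^i(\varinjlim M_\alpha, N) = \varprojlim \Ext^i(M_\alpha, N)$.

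The remaining case $i = 3$ reads $\Ext^3_\cA(M,N) = H^1(C, \mathcal{E}xt^2_\cA(M,N))$. The plan is to show that $\mathcal{E}xt^2_\cA(M,N)$ is a torsion $\cO_C$-module. A coherent torsion sheaf on a smooth curve is a finite direct sum of skyscraper sheaves and therefore has vanishing higher cohomology; this delivers $H^1(C, \mathcal{E}xt^2_\cA(M,N)) = 0$ and completes the argument.

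The main obstacle is this torsion claim, equivalently the vanishing of the generic stalk $\mathcal{E}xt^2_\cA(M,N)_\eta \simeq \Ext^2_{\cA_\eta}(M_\eta, N_\eta)$. The ring $\cA_\eta$ is the localization of any closed stalk $\cA_p$ at the uniformizer $t$ of the discrete valuation ring $\cO_{C,p}$, and the key step is to argue that inverting $t$ drops the homological dimension of the stalk from at most two down to at most one. This parallels the classical commutative fact that inverting a regular central element of a regular local ring lowers both its Krull and global dimensions by one; in the present noncommutative setting, it amounts to verifying that the length-two projective resolutions provided at closed stalks remain exact and shorten to length-one resolutions after tensoring with $K(C)$, using that $t$ is a nonzerodivisor in $\cA_p$ (a consequence of finite homological dimension over the DVR base). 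Once $\cA_\eta$ is shown to be hereditary, $\Ext^2_{\cA_\eta}$ vanishes identically, which establishes the torsion claim and hence the proposition.
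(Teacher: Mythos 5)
Your overall architecture is sound and, once completed, would be essentially a repackaging of the paper's argument: both proofs run the local-to-global Ext spectral sequence on the curve and reduce everything to killing the contribution $H^1\bigl(C,\mathcal{E}xt^2_{\cA}(M,N)\bigr)$. Your key claim that the generic stalk $\cA_\eta$ is hereditary (equivalently, that $\mathcal{E}xt^2_{\cA}(M,N)$ is torsion) is in fact true, but the step you yourself flag as the crux --- that inverting $t$ drops the homological dimension of $\cA_p$ from two to one --- is exactly what requires proof, and your sketch does not supply it. Tensoring a length-two projective resolution over $\cA_p$ with $K(C)$ is a flat base change: it stays exact and stays of length two; it does not ``shorten'' unless one proves that the second syzygy becomes projective, and the commutative analogy with regular local rings is not a formal statement about inverting a central nonzerodivisor (it uses regularity in an essential way). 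Your parenthetical claim that $t$ is a nonzerodivisor in $\cA_p$ ``as a consequence of finite homological dimension over the DVR base'' is also false: take $\cA$ to be the product of $\cO_C$ with the skyscraper sheaf of algebras at a closed point $p$ with fiber $\bfk$; every stalk has global dimension at most one, yet $t$ annihilates the skyscraper factor of $\cA_p$ --- and in any case exactness after inverting $t$ was never the issue. The missing mechanism is precisely the computation the paper performs: for a finitely generated $\cA_p$-module $M_0$ that is torsion-free over $R=\cO_{C,p}$, the long exact sequence attached to $0 \to M_0 \xrightarrow{t} M_0 \to M_0/tM_0 \to 0$ together with $\Ext^3_{\cA_p}(M_0/tM_0,-)=0$ (the stalkwise hypothesis) shows that multiplication by $t$ is surjective on $\Ext^2_{\cA_p}(M_0,N)$; for finitely generated $N$ this is a finite $R$-module, so Nakayama gives $\Ext^2_{\cA_p}(M_0,N)=0$ and hence projective dimension at most one for $M_0$. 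Hereditariness of $\cA_\eta$ then follows by clearing denominators to a torsion-free lattice. The paper packages the same divisibility/Nakayama step differently (it splits a coherent module into its $\cO_C$-torsion part, handled by degeneration of the spectral sequence, and its torsion-free part, shown to have local projective dimension at most one), but without that step your proof has a hole exactly where the paper does its work.

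Two secondary points. The reduction to coherent $M$ via $\Ext^i(\varinjlim M_\alpha,N)=\varprojlim\Ext^i(M_\alpha,N)$ is not a valid identity for $i\ge 1$; the correct statement is the Baer-type criterion that in a locally noetherian Grothendieck category injective dimension can be tested against noetherian objects, which is also what justifies the paper's opening sentence ``it suffices to show $\Ext^3_{\cA}(\scrF,-)=0$ for coherent $\scrF$.'' Moreover, for merely quasi-coherent $N$ the sheaf $\mathcal{E}xt^2_{\cA}(M,N)$ need not be coherent, so ``finite direct sum of skyscrapers'' does not apply verbatim; write it as a filtered colimit of coherent torsion subsheaves and use that cohomology on the noetherian scheme $C$ commutes with filtered colimits. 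Both issues are fixable, but they should be repaired along with the main gap above.
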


\begin{proof}
    It suffices to show that $\Ext^3_{\cA}(\scrF,-) = 0$
    for every coherent $\cA$-module $\scrF$.
    If $\scrF$ has dimension 0 as a sheaf on $C$,
    then this immediately follows from our assumption as the local-to-global Ext spectral sequence degenerates for the obvious reason.
    
    Since the torsion part of
    \(
       \scrF
    \)
    as an
    \(
       \cO _{ C }
    \)-module is automatically an
    \(
       \cA
    \)-submodule, it remains to show the assertion under the assumption that $\scrF$ is torsion free as a sheaf on $C$.
    In this case, if we can show that locally at every closed point
    $\scrF$ has projective dimension at most one,
    then the assertion follows from the local-to-global Ext spectral sequence.
    To this end, suppose that $\cO$ is a discrete valuation ring with a uniformizing parameter $t$.
    Consider the following part of the long exact sequence of Ext groups.
    \begin{align}\label{equation:multiplication by t}
        \Ext^2_{\cA}(\scrF,-)
        \xto{\ t \ }
        \Ext^2_{\cA}(\scrF,-)
        \to
        \Ext^3_{\cA}(\scrF/t\scrF,-)
    \end{align}
    The last term is zero by what we have already confirmed, as $\scrF/t\scrF$ is torsion. Hence the first map of~\eqref{equation:multiplication by t} is always surjective. By the structure theorem for finitely generated modules over a discrete valuation ring or Nakayama's lemma, we see $\Ext^2_{\cA}(\scrF,-) = 0$. Thus we conclude the proof.
\end{proof}

\begin{proof}[Proof of~\pref{th:main}]
\pref{th:main}.\ref{it:smooth} is a special case of~\pref{th:hdOQ}.
\pref{th:main}.\ref{it:flat} is clear since
$\cA_{\scrY_\bsr}$ is $\cO_{C \times C^n}$-locally-free.
In order to prove \pref{th:main}.\ref{it:equivalent},
take Zariski open subsets $U_i$ of $C$
such that
$\lambda_i \in U_i$,
$\lambda_j \nin U_i$ for $j \ne i$,
and
$\bigcup_{i=1}^n U_i = C$,
which is possible 
since $\bslambda \in C^n \setminus \Delta$.
Then the restriction of $\cA_{\bfY_{\bsr, \bslambda}}$
to $U_i$ is Morita equivalent to
$\cO_{U_i} \cQ_i$
by \pref{pr:Morita},
where $\cQ_i$ is the labeled quiver
associated with $n = 1$,
$\bsr = r_i$
and
$\bslambda = \lambda_i$.
Recall from \pref{sc:A_Y quiver} that
the quiver $\cQ_i$ is introduced in such a way that
$
\Qcoh \cO_{U_i} \cQ_i \simeq \Qcoh \lb \bfX_{\bsr, \bslambda} \times_C U_i \rb
$.
These equivalences on $U_i$ glue together to give the desired equivalence
on $C$, and \pref{th:main}.\ref{it:equivalent} is proved.
\end{proof}

%
%------------------------------------------------------------------------
%
\section{Graded algebras and \texorpdfstring{$I$}{I}-algebras}\label{sc:I-algebra}

\begin{definition}[{\cite[(4.2.1)]{MR1304753}}]\label{df:ample autoequivalence}
  Let $\scrC$ be an abelian category
  and $\cA$ be an object of $\scrC$.
  An autoequivalence $s$ of $\scrC$ is \emph{ample}
  if
  \begin{itemize}
    \item[(a)]
    for every object $\cM$ of $\scrC$,
    there exist positive integers $l_1,\ldots,l_p$
    and an epimorphism
    $
    \bigoplus_{i=1}^p \cA(-l_i) \to \cM,
    $
    and
    \item[(b)]
    for every epimorphism
    $
    f \colon \cM \to \cN,
    $
    there exists an integer $n_0$
    such that for every $n \ge n_0$,
    the map
    $
    H^0(\cM(n)) \to H^0(\cN(n))
    $
    is surjective,
  \end{itemize}
  where
  $
  H^0(\cM) \coloneqq \Hom(\cA,\cM)
  $
  and
  $
  \cM(l) \coloneqq s^l(\cM)
  $
  for $l \in \bZ$.
\end{definition}

\pref{lm:ample} below is an immediate consequence of~\pref{df:ample autoequivalence}.

\begin{lemma}\label{lm:ample}
    If $\cA$ is a coherent sheaf of algebras on a polarized scheme $(C, \cL)$,
    then the tensor product by $\cL$ 
    gives an ample autoequivalence of $\left( \coh \cA, \cA \right)$.
\end{lemma}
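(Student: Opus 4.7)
The plan is to verify the two conditions of \pref{df:ample autoequivalence} for the pair $(\coh \cA, \cA)$ with autoequivalence $\cM \mapsto \cM \otimes_{\cO_C} \cL$, in each case reducing to the ampleness of $\cL$ on the underlying scheme $C$. The bridge between the $\cA$-linear and the $\cO_C$-linear pictures rests on three standard observations that I will invoke freely: since $\cA$ is coherent over $\cO_C$, every coherent $\cA$-module is coherent as an $\cO_C$-module; a morphism in $\coh \cA$ is an epimorphism if and only if it is so on underlying $\cO_C$-modules; and the extension-of-scalars adjunction $\Hom_{\cA}(\cA \otimes_{\cO_C} \cF, \cM) \cong \Hom_{\cO_C}(\cF, \cM)$ specializes to $\Hom_{\cA}(\cA, -) = H^0(C, -)$.

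For condition (a), starting from a coherent $\cA$-module $\cM$, I will use the ampleness of $\cL$ on $C$ to produce an $\cO_C$-epimorphism $\bigoplus_{i=1}^p \cO_C(-l_i) \twoheadrightarrow \cM$, and then lift it via the adjunction to an $\cA$-module morphism $\bigoplus_{i=1}^p \cA(-l_i) \to \cM$, which remains an epimorphism because surjectivity is detected on the underlying sheaves. For condition (b), given an $\cA$-epimorphism $f \colon \cM \to \cN$ with kernel $\cK$, the short exact sequence $0 \to \cK \to \cM \to \cN \to 0$ is exact as $\cO_C$-modules; Serre vanishing on $C$ then provides $n_0$ with $H^1(C, \cK \otimes \cL^n) = 0$ for all $n \geq n_0$, and the desired surjectivity of $H^0(\cM(n)) \to H^0(\cN(n))$ follows from the long exact cohomology sequence.

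I do not anticipate any real obstacle: the statement is essentially a routine transfer of ampleness from $(C, \cL)$ to $(\coh \cA, \cA)$. The only preliminary point to settle is that tensoring with $\cL$ really does give an autoequivalence of $\coh \cA$, which is immediate because the $\cA$-action on $\cM$ transports to $\cM \otimes_{\cO_C} \cL$ via the identity on $\cL$, and $\cM \mapsto \cM \otimes_{\cO_C} \cL^{-1}$ provides a quasi-inverse since $\cL$ is a line bundle.
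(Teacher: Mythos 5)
Your proof is correct and fills in the details that the paper treats as immediate: the paper offers no explicit argument, stating only that the lemma follows directly from the definition. Your reduction is the standard one, and both steps are sound. For condition (a), starting from an $\cO_C$-epimorphism $\bigoplus \cO_C(-l_i) \twoheadrightarrow \cM$ (available by ampleness of $\cL$ since $\cM$ is $\cO_C$-coherent), the adjoint $\cA$-module map $\bigoplus \cA(-l_i) \to \cM$ is surjective because its composition with the unit map $\bigoplus \cO_C(-l_i) \to \bigoplus \cA(-l_i)$ recovers the original epimorphism. For condition (b), you correctly identify $H^0(\cM) = \Hom_{\coh\cA}(\cA,\cM) \cong H^0(C,\cM)$ via the same adjunction, so the kernel $\cK$ of $f$ (coherent over $\cO_C$) together with Serre vanishing for $\cL$ gives the required surjectivity of $H^0(\cM(n)) \to H^0(\cN(n))$ for $n \gg 0$. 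The only point worth flagging is implicit in your use of Serre vanishing: one needs $C$ proper (over a Noetherian base), which is guaranteed by the term ``polarized scheme'' but could be stated explicitly.
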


\pref{th:Artin-Zhang} below is one of the main results of~\cite{MR1304753}.

\begin{theorem}[{\cite[Theorem 4.5.(1)]{MR1304753}}]\label{th:Artin-Zhang}
  Let $(\scrC,\cA,s)$ be a triple consisting of
  \begin{itemize}
    \item an abelian category $\scrC$,
    \item an object $\cA$ of $\scrC$, and
    \item an autoequivalence $s$ of $\scrC$
  \end{itemize}
  satisfying the following three conditions:
  \begin{enumerate}[1.]
    \item $\cA$ is noetherian.
    \item $A_0 \coloneqq H^0(\cA)$ is a right noetherian ring and
    $H^0(\cM)$ is a finite $A_0$-module for all $\cM$.
    \item $s$ is ample.
  \end{enumerate}
  Then the graded ring
  $
  A \coloneqq \bigoplus_{i=0}^\infty H^0(\cA(i))
  $
  is right noetherian satisfying $\chi_1$,
  and $(\scrC,\cA,s)$ is isomorphic to $(\qgr A, \pi(A), (1))$.
\end{theorem}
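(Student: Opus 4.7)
The plan is to construct an explicit functor $\scrC \to \qgr A$ and prove it is an equivalence, following the classical ``homogeneous coordinate ring'' philosophy of Serre. First I would define the graded pieces by $A_n \coloneqq \Hom_\scrC(\cA, s^n \cA)$ for $n \ge 0$, with multiplication $A_i \otimes A_j \to A_{i+j}$ sending $(a,b)$ to $s^i(b) \circ a$; associativity is immediate from functoriality of $s$. Analogously, for any $\cM \in \scrC$ the graded abelian group $\Gamma(\cM) \coloneqq \bigoplus_{n \ge 0} \Hom_\scrC(\cA, s^n \cM)$ becomes a graded right $A$-module via precomposition, producing a left-exact functor $\Gamma \colon \scrC \to \Gr A$.

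Next I would verify that $\pi \circ \Gamma \colon \scrC \to \qgr A$ is exact, and construct a quasi-inverse $\widetilde{(-)}$. Left-exactness of $\Gamma$ is automatic; the failure of right-exactness becomes invisible in $\qgr A$ precisely by ampleness condition (b): given a surjection $\cM \twoheadrightarrow \cN$, the induced map $H^0(\cM(n)) \to H^0(\cN(n))$ is surjective for $n \gg 0$, so the cokernel of $\Gamma(\cM) \to \Gamma(\cN)$ is concentrated in bounded degrees and thus torsion. For the quasi-inverse, ampleness condition (a) shows every object admits a presentation by direct sums of shifts $\cA(-l_i)$, so given a finitely generated $M \in \Gr A$, one chooses a presentation $\bigoplus A(-m_j) \to \bigoplus A(-l_i) \to M \to 0$, lifts it to a map $\bigoplus \cA(-m_j) \to \bigoplus \cA(-l_i)$ in $\scrC$ via the natural identification $A_{l-m} \cong \Hom(\cA(-l), \cA(-m))$, and defines $\widetilde{M}$ as its cokernel, then extends to arbitrary $M$ by colimits.

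The main technical obstacle is verifying that the unit $\cM \to \widetilde{\pi\Gamma(\cM)}$ and counit $\pi\Gamma(\widetilde{M}) \to \pi M$ are isomorphisms, and that $\widetilde{(-)}$ is well-defined independently of the presentation. Both amount to diagram chases comparing sequences of global sections in sufficiently high degree, where condition (b) of ampleness is used repeatedly to control the error by something that vanishes in $\qgr A$. Once the equivalence is established, right noetherianness of $A$ follows because a chain of graded right ideals $I_1 \subset I_2 \subset \cdots \subset A$ corresponds under $\widetilde{(-)}$ to a chain of subobjects of $\cA$, which stabilizes by hypothesis~(1). The condition $\chi_1$, asserting that $\Ext^1_A(A_0, M)$ is finite over $A_0$ for noetherian $M$, is the most delicate property: I would deduce it by rewriting these $\Ext$-groups in terms of $H^0$ of shifted objects in $\scrC$, and using the finiteness in hypothesis~(2) together with ampleness to control growth in high degree. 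This $\chi_1$ step is where I expect to spend most of the effort.
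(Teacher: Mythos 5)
The paper does not prove this statement at all: it is cited as Theorem~4.5(1) of Artin--Zhang \cite{MR1304753} and used as a black box, so there is no ``paper's proof'' to compare against. What you have written is a plausible reconstruction of the Artin--Zhang argument, and the overall architecture is right: define $A = \bigoplus_n \Hom(\cA, s^n\cA)$ and $\Gamma(\cM) = \bigoplus_n \Hom(\cA, s^n\cM)$, show $\pi\circ\Gamma$ is exact using ampleness (b), build a candidate inverse from presentations using ampleness (a), and then grind through the comparison. That is indeed how the proof goes.

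There is, however, a genuine gap in your deduction of right noetherianness. You argue that a chain of graded right ideals $I_1 \subset I_2 \subset \cdots \subset A$ maps under $\widetilde{(-)}$ to a chain of subobjects of $\cA$, which stabilizes by hypothesis (1), and conclude that the $I_n$ stabilize. But $\widetilde{(-)}$ factors through $\qgr A$, where torsion is killed; two ideals $I \subsetneq J$ with $J/I$ torsion have $\widetilde{I} = \widetilde{J}$. So stabilization of $\widetilde{I_n}$ only tells you the chain eventually grows by torsion, and a priori a chain can grow forever by torsion. To close this you need to bound the torsion gap between an ideal and its saturation $\Gamma(\widetilde{I})$, and that is exactly where condition (2) --- finiteness of $H^0(\cM)$ as an $A_0$-module --- together with ampleness comes in. In Artin--Zhang the noetherianness of $A$ and the condition $\chi_1$ are proved hand in hand and neither is a free corollary of the categorical equivalence; the argument essentially establishes that saturation only changes an ideal in finitely many degrees and by finitely generated $A_0$-modules. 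You are right that $\chi_1$ is the delicate point, but noetherianness needs more than you have said, and for the same underlying reason. A smaller issue: with the multiplication $ab = s^i(b)\circ a$ for $a \in A_i$, ``precomposition'' gives $\Gamma(\cM)$ the structure of a \emph{left} $A$-module (equivalently a right $A^{\mathrm{op}}$-module), so your conventions would need to be adjusted to land on a right $A$-module as claimed.
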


Given a set $J$,
an \emph{$J$-algebra}
is a category
whose set of objects is identified with $J$.
A $\bZ$-algebra is a generalization
of $\bZ$-graded algebra~\cite{MR1230966}.
A $\bZ$-algebra analog of~\pref{th:Artin-Zhang}
is given in~\cite[Theorem~2.4]{MR2122731}.

% The following definition is taken from~\cite[Section 2]{MR2122731}.

% \begin{definition}\label{df:ample sequence}
% A sequence $(E_i)_{i \in \bZ}$ of objects
% in an abelian category $\cC$
% is \emph{ample} if the following hold:
% \begin{enumerate}[(a)]
% \item
% If an object $\cM$ satisfies
% $\Hom(E_i, \cM) = 0$
% for all $i \ll 0$,
% then $\cM = 0$.
% \item
% For every object $\cM$
% and every $m \in \bZ$,
% there exists a sequence
% $i_1, \ldots, i_s$
% of integers less than $m$
% such that
% the map
% \begin{align}
%   \bigoplus_{j=1}^s \Hom \left( E_{i_j}, \cM \right) \otimes \Hom \left( E_i, E_{ij} \right)
%   \to \Hom \left( E_i, \cM \right)
% \end{align}
% is surjective
% for all $i \ll 0$.
% \item
% For every epimorphism $\cM \to \cN$,
% the map $\Hom(E_i,M) \to \Hom(E_i, N)$
% is surjective
% for all $i \ll 0$.
% \end{enumerate}
% \end{definition}

%~\pref{th:Polishchuk} below is a variation of~\pref{th:Artin-Zhang}
% for $\bZ$-algebras.

% \begin{theorem}[{\cite[Theorem~2.4]{MR2122731}}]\label{th:Polishchuk}
% If $(E_i)_{i \in \bZ}$ is an ample sequence
% in an abelian category $\cC$,
% then one has $\cC \simeq \qgr A$
% where $A$ is the full subcategory of $\cC$
% consisting of $( E_i )_{i \in \bZ}$.
% \end{theorem}

Let
$\cA = \cO_C \cQ$ be the path algebra
of a quiver
$\cQ=(Q,D_{\bullet})$
labeled by effective divisors
on an integral polarized scheme $(C,\cL)$.
The graded ring associated with the triple
$(\coh \cA, \cA, \cL \otimes(-))$
as in~\pref{th:Artin-Zhang}
will be denoted by $A$.

Define an $J$-algebra
for $J \coloneqq Q_0 \times \bZ$
by
\begin{align}
  B_{(v,m)(w,n)} \coloneqq H^0 \left( \bfe_v \cA \bfe_w \otimes_{\cO_C} \cL^{\otimes (n-m)} \right),
\end{align}
whose multiplication
is induced from that of $\cA$
through the morphism
\begin{align}
  \left( \bfe_u \cA \bfe_v \otimes_{\cO_C} \cL^{\otimes (m-l)} \right)
   \otimes_{\cO_C}
  \left( \bfe_v \cA \bfe_w \otimes_{\cO_C} \cL^{\otimes (n-m)} \right)
  \to
  \bfe_u \cA \bfe_w \otimes_{\cO_C} \cL^{\otimes (n-l)}.
\end{align}
One can collapse the $J$-structure
to a $\bZ$-structure
by
\begin{align}
  B'_{mn} \coloneqq \bigoplus_{v,w \in Q_0} B_{(v,m)(w,n)}
\end{align}
without changing the categories $\gr$ and $\qgr$;
\begin{align}
  \gr B \simeq \gr B', \qquad
  \qgr B \simeq \qgr B'.
\end{align}

As the index set
\(
   K = I
\)
satisfies the assumption of~\pref{th:Qcoh Y},
it follows that the $\bZ$-algebra $B'$ is related to $A$ by
$B'_{mn} = A_{n-m}$,
so that
\begin{align}
  \gr A \simeq \gr B', \qquad
  \qgr A \simeq \qgr B'
\end{align}

If $\cQ$ is the labeled quiver
introduced in the beginning of~\pref{sc:A_Y quiver},
then the resulting $J$-algebra $B$ coincides
with the category $\sfS_I$
appearing in~\eqref{eq:sfSK}, where the index set
\(
   K
\)
is taken to be the set
\(
   I
\)
defined in~\eqref{eq:I},
so that~\pref{th:Artin-Zhang} gives
\begin{align}
  \coh \cA_\bfY
  \simeq \qgr A
  \simeq \qgr B
  \simeq \qgr \sfS _{ I }.
\end{align}
This proves the equivalence~\pref{eq:Qcoh X} for $K=I$.
The equivalence for general $K$ is proved similarly
by using the $K$-algebra $\sfS = \sfS_K$,
and~\pref{th:Qcoh Y} is proved.

%
%------------------------------------------------------------------------
%
\section{A full strong exceptional collection on \texorpdfstring{$\bfY$}{Y}}\label{sc:FSEC}

In this section,
we always assume
\(
   C = \bP ^{ 1 }
\).

\begin{theorem}\label{th:FSEC}
  The sequence
  $
  \left( \cO_\bfY \left( \veca \right) \right)_{0 \preceq \veca \preceq \vecc}
  $
  of objects of $\coh \bfY \coloneqq \qgr \sfS_I$
  is a full strong exceptional collection of
  \(
     D ^{ b } \coh \bfY
  \)
  whose total morphism algebra is the path algebra of the quiver
  in~\pref{fg:orbifold projective line}
  with relations~\pref{eq:canonical relations}.    
\end{theorem}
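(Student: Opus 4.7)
The plan is to transfer the question to the sheaf of algebras side via~\pref{th:Qcoh Y}, which identifies $\coh \bfY \simeq \coh \cA_\bfY$, and~\pref{lm:A=OQ}, which presents $\cA_\bfY$ as the path algebra $\cO_{\bP^1} \cQ$ of the labelled quiver obtained by identifying the vertices $0$ and $1$ of~\pref{fg:orbifold projective line}. Under this equivalence the object $\cO_\bfY(\veca)$ for $\veca \in I_0$ corresponds to the indecomposable projective $\cA_\bfY$-module at the vertex $\veca$, while $\cO_\bfY(\vecc)$ corresponds to the twist by $\cL \coloneqq \cO_{\bP^1}(1)$ of the vertex-$0$ projective, using that the shift by $\vecc$ in the $J$-algebra $\sfS_I$ from~\pref{sc:I-algebra} is identified with twist by $\cL$ in the Artin--Zhang description of~\pref{th:Artin-Zhang}.

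Next, I would compute the Hom spaces via~\pref{pr:OQbasis}, which presents $\bfe_w \cA_\bfY \bfe_v$ as $\bigoplus_\gamma \cO_{\bP^1}(-D_\gamma)$ summed over acyclic paths $\gamma$ in $\cQ$, and then take global sections on $\bP^1$, possibly with an $\cL^{\pm 1}$ twist. In our setting each label $D_\gamma$ is either $0$ or a single point $\lambda_i$, so the relevant line bundles have degree in $\{-1, 0, 1\}$ and the cohomology is elementary. A case-by-case analysis then verifies $\End(\cO_\bfY(\veca)) = \bfk$ and the vanishing of all ``backward'' Homs; the generating arrows match those of~\pref{fg:orbifold projective line}; and the $n$ length-$r_i$ paths $a_{i, r_i} \cdots a_{i, 1} \colon \cO_\bfY \to \cO_\bfY(\vecc)$ correspond to the sections $x_i^{r_i}$ inside the two-dimensional space $H^0(\bP^1, \cL)$ via the defining relation of $S_{\bsr,\bslambda}$ in~\pref{eq:S}, producing precisely the $n-2$ relations of~\pref{eq:canonical relations}.

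Strong exceptionality follows from the same computation: every collection object is locally $\cA_\bfY$-projective, so higher local Exts vanish, and the global Exts reduce to sheaf cohomology on $\bP^1$ of line bundles of degree $\ge -1$, whose $H^{\ge 1}$ is zero.

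The main obstacle is fullness. Writing $T$ for the direct sum of the collection, the previous step identifies $\End(T)$ with the canonical algebra $\bfk Q/\cI$ of Geigle--Lenzing~\cite{MR915180}, which has finite global dimension. Combined with the Ext vanishing this makes $\RHom(T, -) \colon D^b \coh \bfY \to D^b \module \End(T)$ fully faithful, so it suffices to show that the collection classes form a basis of $K_0(D^b \coh \bfY)$. For this I would use that by~\pref{th:main}.\ref{it:flat} the rank of $K_0$ is constant along the flat family $\cA_{\scrY_\bsr}$ over $C^n$, reducing the question to generic $\bslambda \in C^n \setminus \Delta$, where~\pref{th:main}.\ref{it:equivalent} identifies $\coh \bfY \simeq \coh \bfX$ and the collection becomes that of~\cite[Proposition~4.1]{MR915180}, already known to be full strong exceptional.
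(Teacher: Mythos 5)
Your computation of the Hom spaces and the identification of the total morphism algebra with $\bfk Q / \cI$ are essentially the same as the paper's: both pass through $\coh \cA_\bfY$, observe that the objects become locally projective $\cA_\bfY$-modules, and reduce Ext computations to sheaf cohomology on $\bP^1$ of line bundles of degree $\ge -1$. That part is sound.

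The fullness argument is where the proposal breaks. First, strong exceptionality together with finite global dimension of $\End(T)$ does \emph{not} make $\RHom(T,-)$ fully faithful; what it gives is that the \emph{left} adjoint $-\otimes^{\mathbf L}_{\End(T)} T \colon D^b \module \End(T) \to D^b \coh \bfY$ is fully faithful. The right adjoint $\RHom(T,-)$ is fully faithful precisely when $T$ generates, which is what you are trying to prove, so the claim as written is circular. Second, even granting the fully faithful left adjoint, concluding fullness from a $K_0$ count is not valid in general: the orthogonal complement $T^{\perp}$ is an admissible subcategory, and $K_0(T^{\perp}) = 0$ does not force $T^{\perp} = 0$ because of the phantom-category phenomenon. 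One would need a separate argument (for instance, an analogue of the statement that smooth proper curves admit no phantoms) which you do not supply. Third, the step ``the rank of $K_0$ is constant along the flat family'' is not justified; flatness of $\cA_{\scrY_\bsr}$ does not by itself control how $K_0$ of the fibers varies, and no semicontinuity or rigidity result is invoked.

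The paper's proof of fullness avoids all of this by a direct argument: it uses the short exact sequences
\begin{align}
  0 \to \cO_\bfY(0) \to \cO_\bfY(a \vecx_i) \oplus \cO_\bfY(\vecc) \to \cO_\bfY(a \vecx_i + \vecc) \to 0
\end{align}
and their translates by $\bZ\vecc$ to show that every twist $\cA_\bfY(i)$ lies in the triangulated subcategory generated by the collection; since these twists generate $D^b \coh \cA_\bfY$ by ampleness, fullness follows. If you want to keep a deformation-theoretic flavor, you would still need the explicit short exact sequences (or an equivalent resolution) at the special fibers; the abstract $K_0$ route requires additional input that is not present in your sketch.
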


\begin{proof}
Under the equivalence with $\coh \cA_\bfY$,
the object
$
\cO_\bfY \left( a \vecx_i \right)
$
for
$
0 < a < r_i
$
corresponds to the $\cA_\bfY$-module
$\bfe_{i,a} \cA_\bfY$,
and the objects
$\cO_\bfY(0)$
and
$\cO_\bfY(\vecc)$
correspond to
$\bfe_0 \cA_\bfY$
and
$\bfe_0 \cA_\bfY(1)$
respectively.
Since they are sheaves of projective $\cA_\bfY$-modules,
Ext-groups between them can be computed without taking further local projective resolutions as $\cA_\bfY$-modules.
Namely, we have
\begin{align}
    \Hom ^{ \bullet }
    _{
        \coh \cA _{ Y }
    }
    \left(
        \bfe _{
            \veca
        }
        \cA _{ \bfY },
        \bfe _{
            \vecb
        }
        \cA _{ \bfY }
    \right)
    &\simeq
    \bfR \Gamma
    \left(
        \bfY,
        \cHom
        ^{
            \bullet
        }
        _{
            \cA _{ \bfY }
        }
        \left(
            \bfe _{
                \veca
            }
            \cA _{ \bfY },
            \bfe _{
                \vecb
            }
            \cA _{ \bfY }            
        \right)
    \right)
    \\
    &\simeq
    \bfR \Gamma
    \left(
        \bfY,
        \cHom
        _{
            \cA _{ \bfY }
        }
        \left(
            \bfe _{
                \veca
            }
            \cA _{ \bfY },
            \bfe _{
                \vecb
            }
            \cA _{ \bfY }            
        \right)
    \right) \\  
    &\simeq
    \bfR \Gamma
    \left(
        \bfY,
        \cO
        _{
            \bfY
        }
        \left(
            \vecb
            -
            \veca
        \right)
    \right) \\
    &\simeq
    \bfR \Gamma
    \left(
        \bP ^{ 1 },
        \varpi _{ \ast }
        \cO
        _{
            \bfY
        }
        \left(
            \vecb
            -
            \veca
        \right)
    \right).
\end{align}
It then follows that
$
    \left(
        \cO_\bfY
        \left(
            \veca
        \right)
    \right)
    _{
        0 \preceq \veca \preceq \vecc
    }
$
is a strong exceptional collection
whose total morphism algebra is the path algebra of the quiver
in~\pref{fg:orbifold projective line}
with relations~\pref{eq:canonical relations}.
In order to show that the collection
% $
% \left( \cO_\bfY \left( \veca \right) \right)_{0 \preceq \veca \preceq \vecc}
% $
is full,
one can use the exact sequences
\begin{align}
  0
  \to
  \cO_\bfY(0)
  \to
  \cO_\bfY \left( a \vecx_i \right) \oplus \cO_\bfY \left( \vecc \right)
  \to
  \cO_\bfY \left( a \vecx_i + \vecc \right)
  \to
  0
\end{align}
and their translates by $\bZ \vecc$
to show that $\cA_\bfY(i)$ for any $i \in \bZ$
is contained in the full triangulated subcategory
generated by the collection.
\end{proof}

\pref{th:derived equivalence} is an immediate consequence of~\pref{th:FSEC}.

\bibliographystyle{amsalpha}
\bibliography{bibs}

\end{document}